\numberwithin{equation}{section}
\newtheorem{dfn}{Definition}[section]
\newtheorem{thm}[dfn]{Theorem}
\newtheorem{ppsn}[dfn]{Proposition}
\newtheorem{crlre}[dfn]{Corollary}
\newtheorem{xmpl}[dfn]{Example}
\newtheorem{rmrk}[dfn]{Remark}
\newcommand{\D}{\mathbb{D}}
\newcommand{\C}{\mathbb{C}}		
\newcommand{\fcl}{\mathcal{F}}
\newcommand{\mcl}{\mathcal{M}}
\newcommand{\wcl}{\mathcal{W}}
\newcommand{\kcl}{\mathcal{K}}
\newcommand{\hdcr}{{H}^2_{\mathbb{C}^r}(\mathbb{D})}
\newcommand{\hdcmone}{H^2_{\mathbb{C}^{m+1}}(\mathbb{D})}
\newcommand{\hdcm}{{H}^2_{\mathbb{C}^m}(\mathbb{D})}
\newcommand{\hdcc}{{H}^2_{\mathbb{C}}(\mathbb{D})}
\DeclarePairedDelimiterX{\norm}[1]{\lVert}{\rVert}{#1}
\begin{document}
	
	\title[Kernels of Perturbed Toeplitz Operators in vector-valued Hardy spaces]{Kernels of Perturbed Toeplitz Operators in vector-valued Hardy spaces}
	
	
	\author[Chattopadhyay] {Arup Chattopadhyay}
	\address{Department of Mathematics, Indian Institute of Technology Guwahati, Guwahati, 781039, India}
	\email{arupchatt@iitg.ac.in, 2003arupchattopadhyay@gmail.com}

	\author[Das]{Soma Das}
	\address{Department of Mathematics, Indian Institute of Technology Guwahati, Guwahati, 781039, India}
	\email{soma18@iitg.ac.in, dsoma994@gmail.com}
	
	\author[Pradhan]{Chandan Pradhan}
	\address{Department of Mathematics, Indian Institute of Technology Guwahati, Guwahati, 781039, India}
	\email{chandan.math@iitg.ac.in, chandan.pradhan2108@gmail.com}

	\subjclass[2010]{47A13, 47A15, 47A80, 46E20, 47B38, 47B32,  30H10}
	
	\keywords{Vector valued Hardy space, nearly invariant subspaces, toeplitz operator, shift operator, Beurlings theorem, multiplier operator}
	
	\begin{abstract}
	Recently, Liang and Partington \cite{YP} show that kernels of finite-rank perturbations of Toeplitz operators are nearly invariant with finite defect under the backward shift operator acting on the scalar-valued Hardy space. In this article we provide a vectorial generalization of a result of Liang and Partington. As an immediate application we identify the kernel of perturbed Toeplitz operator in terms of backward shift-invariant subspaces in various important cases by applying the recent theorem (\cite{CDP, OR}) in connection with nearly invariant subspaces of finite defect for the backward shift operator acting on the vector-valued Hardy space .
    \end{abstract}
	\maketitle
	\section{Introduction}
	 It is well known that the kernel of a Toeplitz operator is nearly invariant under the backward shift operator acting on the scalar-valued Hardy space and the concept of nearly backward shift invariant subspaces was first introduced by Hitt in \cite{HIT} as a generalization to Hayashi’s results concerning Toeplitz kernels in \cite{HE}. Later Sarason \cite{SR} further investigated these spaces and modified Hitt’s algorithm for scalar-valued Hardy space to study the kernels of Toeplitz operators. In 2010, Chalendar-Chevrot-Partington (C-C-P) \cite{CCP} gives a complete characterization of nearly invariant subspaces under the backward shift operator acting on the vector-valued Hardy space, providing a vectorial generalization of a result of Hitt. Recently, Chalendar-Gallardo-Partington (C-G-P) \cite{CGP} introduce the notion of nearly invariant subspace of finite defect for the backward shift operator acting on the scalar valued Hardy space as a generalization of nearly invariant subspaces and provides a complete characterization of these spaces in terms of backward shift invariant subspaces.  A recent preprint \cite{CDP} by the authors of this article characterizes nearly invariant subspace of finite defect for the backward shift operator acting on the vector-valued Hardy space, providing a vectorial generalization of a result of Chalendar-Gallardo-Partington (C-G-P). In this connection it is worth mentioning that similar kind of connection independently obtained by Ryan O\textquoteright Loughlin in \cite{OR}. Furthermore in this context, Liang and Partington \cite{YP} recently provide a connection between kernels of finite-rank perturbations of Toeplitz operators and nearly invariant subspaces with finite defect under the backward shift operator acting on the scalar-valued Hardy space. In other words they give an affirmative answer of the following question which is closely related with the invariant subspace problem:
    \begin{equation} \label{eq1}
     \begin{split}
     & \textup{\emph{Given a Toeplitz operator T acting on the scalar-valued Hardy space, does the kernel of a}}\\
     & \textup{\emph{finite-rank perturbation of T is nearly backward shift invariant with finite defect ? }}
     \end{split}
\end{equation}
Moreover, they also identify the kernel of perturbed Toeplitz operator in terms of backward shift-invariant subspaces in several important cases by applying a recent theorem by Chalendar--Gallardo--Partington (C-G-P).

The purpose of this paper is to study the kernels of finite-rank perturbations of Toeplitz operators and its connection with nearly invariant subspaces with finite defect under the backward shift operator acting on the vector-valued Hardy space. In other words we give an affirmative answer of the above question \eqref{eq1} in the vector-valued Hardy space, providing a vectorial generalization of a result of Liang and Partington. Furthermore, we also identify the kernel of perturbed Toeplitz operator in terms of backward shift-invariant subspaces by applying our recent theorem (see \cite[Theorem 3.5]{CDP}) in connection with nearly invariant subspaces of finite defect for the backward shift operator acting on the vector-valued Hardy space in several important cases as mentioned by Liang and Partington in \cite{YP}. For more information on this direction of research, we refer the reader to (\cite{HE,NC, T}) and the references therein. In order to state the precise contribution of this paper, we need to introduce first some definitions and notations.   

The $\mathbb{C}^m$- valued Hardy space \cite{JP} over the unit disc $\mathbb{D}$ is denoted by $\hdcm$ and defined by $$\hdcm:=\Big\{F(z)=\sum_{n\geq 0} A_nz^n:~\|F\|^2= \sum_{n\geq 0}~\norm{A_n}_{\mathbb{C}^m}^2<\infty,~A_n\in\mathbb{C}^m\Big\}.$$ We can also view the above Hilbert space as the direct sum of $m$-copies of $H^2_{\mathbb{C}}(\D)$ or  sometimes it is useful to see the above space as a tensor product of two Hilbert spaces $H^2_{\mathbb{C}}(\D)$ and $\mathbb{C}^m$, that is, $$H^2_{\mathbb{C}^m}(\D)  \equiv   \underbrace{H^2_{\mathbb{C}}(\D)\oplus \cdots\oplus H^2_{\mathbb{C}}(\D)}_{m}\equiv H^2_{\mathbb{C}}(\D)\otimes \mathbb{C}^m.$$
On the other hand the space $\hdcm$ can also be defined as the collection of all $\mathbb{C}^m$-valued analytic functions $F$ on $\mathbb{D}$ such that 
$$\norm{F}= \Big[~\sup_{0\leq r<1} \frac{1}{2\pi} \int_0^{2\pi} \vert F(re^{i\theta})\vert^2~ d\theta~\Big]^{\frac{1}{2}}<\infty.$$ 
Moreover the nontangential boundary limit (or radial limit) $$F(e^{i\theta}):= \lim\limits_{r\rightarrow 1-}F(re^{i\theta})$$ exists almost everywhere on the unit circle $\mathbb{T}$ (for more details see \cite{NB}, I.3.11). Therefore $\hdcm$ can be embedded isomertically as a closed subspace of
$L^2(\mathbb{T},\mathbb{C}^m)$ by identifying $\hdcm$ through the nontangential boundary limits of the $\hdcm$ functions. Furthermore $L^2(\mathbb{T},\mathbb{C}^m)$ can be decomposed in the following way 
$$L^2(\mathbb{T},\mathbb{C}^m)=\hdcm \oplus \overline{ H _0^2},$$ where $\overline{ H _0^2}=\{F\in L^2(\mathbb{T},\mathbb{C}^m):\overline{F}\in \hdcm ~and~ F(0)=0 \}$. 
In other words in the above decomposition $\hdcm$ is identified with the subspace spanned by $\{e^{int}:n\geq 0\}$  and  $\overline{ H _0^2}$ is the subspace spanned by $\{e^{int}:n<0\}$, respectively. 
Let $S$ denote the forward shift operator  (multiplication by the independent variable)
acting on $\hdcm$, that is, $SF(z)=zF(z)$,~$z\in \D$. The adjoint of $S$ is denoted by $S^*$ and defined in $\hdcm$ as the operator 
$$S^*(F)(z)=\dfrac{F(z)-F(0)}{z},~~ F\in\hdcm$$ which is known as backward shift operator.
The Banach space of all $\mathcal{L}(\mathbb{C}^r,\mathbb{C}^m)$ (set of all bounded linear operators from $\mathbb{C}^r$ to $\mathbb{C}^m$)- valued  bounded analytic functions on $\mathbb{D}$ is denoted by 
$H^{\infty}_{\mathcal{L}(\mathbb{C}^r,\mathbb{C}^m)}(\D)$ and the associated norm is $$\norm F_{\infty} = \sup_{z\in \D} ~\norm {F(z)}.$$ Moreover, the space 
$H^{\infty}(\D,\mathcal{L}(\mathbb{C}^r,\mathbb{C}^m))$ can be embedded isometrically as a closed subspace of $L^\infty(\mathbb{T},\mathcal{L}(\mathbb{C}^r,\mathbb{C}^m))$.
Note that each $\Theta\in H^{\infty}_{\mathcal{L}(\mathbb{C}^r,\mathbb{C}^m)}(\D)$ induces a bounded linear map $T_{\Theta}\in H^{\infty}_{\mathcal{L}(\mathbb{C}^r,\mathbb{C}^m)}(\D)$ defined by
$$T_{\Theta}F(z)=\Theta(z)F(z).~~(F\in H^{2}_{\mathbb{C}^r}(\D))$$
The elements of $H^{\infty}_{\mathcal{L}(\mathbb{C}^r,\mathbb{C}^m)}(\D)$  are called the \emph{multipliers} and are determined by
$$\Theta\in H^{\infty}_{\mathcal{L}(\mathbb{C}^r,\mathbb{C}^m)}(\D)\textit{~if~and~only~if~} ST_{\Theta}=T_{\Theta}S,$$
where the shift $S$ on the left hand side and the right hand side act on $ H ^{2}_{\mathbb{C}^m}(\D)$ and $ H ^{2}_{\mathbb{C}^r}(\D)$ respectively. A multiplier $\Theta \in  H ^{\infty}_{\mathcal{L}(\mathbb{C}^r,\mathbb{C}^m)}(\D)$ is said to be \emph{inner} if $T_{\Theta}$ is an isometry, or equivalently, 
$\Theta(e^{it})\in \mathcal{L}(\mathbb{C}^r,\mathbb{C}^m)$ is an isometry almost everywhere with respect to the Lebesgue measure on $\mathbb{T}$. Inner multipliers are among the most important tools for classifying invariant subspaces of reproducing kernel Hilbert spaces. For instance: 

\begin{thm}\label{a1}
	(Beurling-Lax-Halmos \cite{NF}) 
	A non-zero closed subspace $\mathcal{M}\subseteq  H ^{2}_{\mathbb{C}^m}(\D)$ is shift
	invariant if and only if there exists an inner multiplier $\Theta \in H ^{\infty}_{\mathcal{L}(\mathbb{C}^r,\mathbb{C}^m)}(\D)$ such that
	$$\mathcal{M} = \Theta  H ^{2}_{\mathbb{C}^r}(\D),$$
	for some $r$ ($1\leq r\leq m$).  
\end{thm}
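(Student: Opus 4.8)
The plan is to treat the two implications separately, with the forward (structural) implication resting on the classical wandering-subspace technique of Halmos. For the easy ``if'' direction, suppose $\mcl=\Theta\,\hdcr$ for an inner multiplier $\Theta$. Since $T_\Theta$ intertwines the shifts, $S\mcl=S T_\Theta \hdcr=T_\Theta (S\hdcr)\subseteq T_\Theta\hdcr=\mcl$, so $\mcl$ is shift invariant; and because $\Theta$ is inner, $T_\Theta$ is an isometry, hence it carries the closed space $\hdcr$ onto a closed subspace, so $\mcl$ is closed.

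For the substantial ``only if'' direction, I would start from a nonzero closed shift-invariant $\mcl$ and form the wandering subspace $\wcl:=\mcl\ominus S\mcl$. First I would establish the Wold-type decomposition $\mcl=\bigoplus_{n\ge 0}S^n\wcl$: since $S$ is an isometry, the subspaces $S^n\wcl$ are pairwise orthogonal (orthogonality of $\wcl$ to $S\mcl$ propagates under the isometry $S$), and the residual part $\bigcap_{n\ge0}S^n\mcl$ is contained in $\bigcap_{n\ge0}S^n\hdcm=\{0\}$, so it vanishes. This also forces $\wcl\neq\{0\}$, for otherwise $\mcl=S\mcl=\bigcap_n S^n\mcl=\{0\}$.

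The crucial step is the bound $r:=\dim\wcl\le m$. Here I would exploit that for $f,g\in\wcl$ one has $\langle f, S^n g\rangle=0$ for every $n\ge1$ (as $S^n g\in S\mcl$), and symmetrically in $g,f$; reading these inner products as Fourier coefficients of the scalar boundary function $t\mapsto\langle f(e^{it}),g(e^{it})\rangle_{\mathbb{C}^m}$ shows that this function is a.e.\ constant, equal to $\langle f,g\rangle$. Consequently an orthonormal basis $e_1,\dots,e_r$ of $\wcl$ satisfies $\langle e_i(e^{it}),e_j(e^{it})\rangle_{\mathbb{C}^m}=\delta_{ij}$ a.e., so the vectors $e_1(e^{it}),\dots,e_r(e^{it})$ are orthonormal in $\mathbb{C}^m$ almost everywhere, giving $r\le m$ together with $r\ge1$ from the previous step.

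Finally I would assemble $\Theta$ by letting $\Theta(z)\in\mathcal{L}(\mathbb{C}^r,\mathbb{C}^m)$ be the operator sending the standard basis of $\mathbb{C}^r$ to $e_1(z),\dots,e_r(z)$; the a.e.\ orthonormality of the columns makes $\Theta(e^{it})$ an isometry a.e., hence $\Theta\in H^\infty_{\mathcal{L}(\mathbb{C}^r,\mathbb{C}^m)}(\D)$ is inner. Since $T_\Theta$ maps the constants of $\hdcr$ onto $\wcl$ and intertwines the shifts, $T_\Theta(S^n\mathbb{C}^r)=S^n\wcl$, and summing over $n$ (using that $T_\Theta$ is an isometry, so the image is already closed) yields $T_\Theta\hdcr=\bigoplus_{n\ge0}S^n\wcl=\mcl$. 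I expect the main obstacle to be the dimension bound $r\le m$: the orthogonality relations among the $S^n\wcl$ are routine once the Wold decomposition is set up, but converting the vanishing of the off-diagonal Fourier coefficients into pointwise a.e.\ orthonormality of the boundary values---and thereby controlling $\dim\wcl$ by the ambient multiplicity $m$---is the step that genuinely uses the vector-valued structure.
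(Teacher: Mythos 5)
Your proof is correct, but note that the paper itself gives no proof of this statement: Theorem \ref{a1} is quoted as the classical Beurling--Lax--Halmos theorem with a citation to Sz.-Nagy--Foia\c{s} \cite{NF}, where the argument is exactly the one you reconstruct (Wold decomposition of the restricted shift, a.e.\ orthonormality of boundary values of a wandering-subspace basis to get $\dim\wcl\le m$, and assembly of $\Theta$ from that basis). The only point you gloss over is why $\Theta$ built from $H^2$ columns is actually an $H^\infty$ multiplier, which follows from the standard fact that an $H^2$ function with a.e.\ bounded boundary values lies in $H^\infty$; with that remark inserted, your write-up is a complete and faithful rendition of the standard proof.
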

Consequently, the space $\mathcal{M}^{\perp}$ of 
$H^2_{\mathbb{C}^m}(\D)$ which is invariant under $S^*$ (backward shift) can be represented as 
$$\mathcal{K}_{\Theta}: = \mathcal{M}^{\perp} =  H^2_{\mathbb{C}^m}(\D) \ominus \Theta  H^2_{\mathbb{C}^r}(\D),$$
which also known as model spaces (\cite{EMV1,EMV2,NB,RSN}).
Let $P_m:L^2(\mathbb{T},\mathbb{C}^m) \to H^2_{C^m}(\D)$ be an orthogonal projection onto $H^2_{C^m}(\D)$ defined by $$ \sum_{n=-\infty}^{\infty}A_ne^{int}\mapsto\sum_{n=0}^{\infty}A_ne^{int}.$$ Therefore $P_m(F)=(Pf_1,Pf_2,\ldots ,Pf_m),$ where $P$ is the Riesz projection on $\hdcc$ \cite{EMV1} and $F=(f_1,f_2,\ldots,f_m)\in L^2(\mathbb{T},\mathbb{C}^m)$.
Also note that for any $\Phi \in L^\infty(\mathbb{T},\mathcal{L}(\mathbb{C}^m,\mathbb{C}^m))$, the Toeplitz operator $T_{\Phi}:\hdcm \to \hdcm$ is defined by $$T_{\Phi}(F)=P_m(\Phi F)$$ for any $F\in \hdcm$. Since $\hdcm$ can be written as direct sum of m-copies of $\hdcc$, then we have the following matrix-representation of $T_{\Phi}:$
\begin{equation}
T_{\Phi}=
T_{\Phi}=
\begin{bmatrix}  
T_{\phi _{11}}&T_{\phi _{12}}&\cdots&T_{\phi _{1m}} \\[1pt]
T_{\phi _{21}}& T_{\phi _{22}}&\cdots&T_{\phi _{2m}} \\
\vdots&\vdots&\ddots&\vdots\\
T_{\phi _{m1}}& T_{\phi _{m2}}&\cdots& T_{\phi _{mm}} \\
\end{bmatrix}_{m\times m}, 
\end{equation}
where
\begin{equation}
\Phi=
\begin{bmatrix}  
\phi _{11}&\phi _{12}&\cdots&\phi _{1m} \\[1pt]
\phi _{21}&\phi _{22} &\cdots&\phi _{2m} \\
\vdots&\vdots&\ddots&\vdots\\
\phi _{m1}&\phi _{m2}&\cdots&\phi _{mm} \\
\end{bmatrix}_{m\times m}
\end{equation}
is an element of $L^\infty(\mathbb{T},\mathcal{L}(\mathbb{C}^m,\mathbb{C}^m))$ and each $T_{\phi _{ij}}$ is a Toeplitz operator in $\hdcc$ \cite{RIW}. It is well known that $T_\Phi ^*=T_{\Phi ^*}$. Next we introduce the definition of nearly invariant subspaces for $S^*$ in vector valued Hardy space.

\begin{dfn}
A closed subspace $\mathcal{M}$ of $\hdcm$ is said to be nearly invariant for $S^*$ if every element  $F\in \mathcal{M}$ with $F(0)=0$ satisfies $S^*F\in \mathcal{M}$. Moreover a closed subspace $\mcl \subset \hdcm$ is said to be nearly $S^*$-invariant with defect p if and only if there is an $p$-dimensional subspace $\fcl \subset \hdcm$ (which may be taken to be orthogonal to $\mcl$ ) such that if $F \in M,F(0)=0$ then $S^*F$ $\in M\oplus \fcl$, this subspace $\fcl$ is called the defect space.	
\end {dfn}
In our earlier work \cite {CDP} we have shown a connection between nearly $S^*$ invariant subspaces and $S^*$ invariant subspaces in vector valued Hardy space.  It can be easily seen that the kernel of a Toeplitz operator is nearly $S^*$ invariant in vector valued Hardy space. Next we consider a finite rank perturbation (say rank n) of a Toeplitz operator $T_{\Phi}:\hdcm \to \hdcm$, denoted by $T_n$ and defined by as follows: $$T_n(F)=T_{\Phi}(F)+ \sum_{i=1}^{n}\langle F,G_i\rangle H_i~,~~~\forall F\in \hdcm,$$ where $\{G_i\}_{i=1}^{n}$ and $\{H_i\}_{i=1}^n$ are orthonormal sets in $\hdcm$. Therefore it is  natural to ask whether the kernel of $T_n$ is nearly $S^*$ invariant subspace  with finite defect or not. In this article we provide an affirmative answer of this question in several important cases mentioned by Liang and Partington in \cite{YP}. In other words we solve the above mentioned problem \eqref{eq1} in various important cases in vector-valued Hardy spaces, providing a vectorial generalization of a result of Liang and Partington \cite{YP}. For simplicity we first discuss the problem for rank two perturbation, that is for $T_2$ and then we  state our main theorem for rank n perturbation of $T_{\Phi}$, that is for $T_n$.

The rest of the paper is organized as follows: In Section 2, we study the kernel of $T_n$ whenever $\Phi =0$ almost everywhere on the circle and some applications of our earlier theorem \cite[ Theorem 3.5]{CDP}. Section 3,4 and 5 deal with the study of the kernel of $T_n$ in several important cases as mentioned by Liang and Partington \cite{YP} whenever $\Phi$ is non zero almost everywhere on the circle and few applications of our earlier theorem \cite[ Theorem 3.5]{CDP}.

\section{Kernel of finte rank perturbation of Toeplitz operator having symbol zero almost everywhere on the circle}
In this section, we will study the kernel of $T_n=T_{\Phi}+\sum_{i=1}^{n}\langle .,G_i\rangle H_i~~,$ where $\Phi =0$ almost everywhere on $\mathbb{T}$. As we have discussed earlier first we study the kernel of $T_2$ and later we will state the main theorem corresponding to $T_n$. Note that if $\Phi=0$ almost everywhere on $\mathbb{T}$, then the kernel of $T_2$ is given by $$Ker T_2=\hdcm \ominus \bigvee \{G_1,G_2\}.$$ It is easy to check that the kernel of $T_2$ is nearly $S^*$ invariant with defect 2 because if we consider any element $F\in Ker T_2$ with $F(0)=0$, then $S^*(F)\in Ker T_2 \cup (\bigvee\{G_1,G_2\})=\hdcm $ and the defect space is $\fcl =\bigvee\{G_1,G_2\}.$ Now for the general case we have the following result:
\begin{thm}\label{b}
Suppose $\Phi =0$ almost everywhere on $\mathbb{T}$. Then the subspace $Ker T_n$ is nearly $S^*$ invariant with defect $n$ and the defect space is $\fcl =\bigvee \{G_1,G_2,...,G_n\}$. 
\end{thm}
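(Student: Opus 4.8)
The plan is to first identify $\ker T_n$ completely explicitly and then read off the near-invariance from the orthogonal decomposition $\hdcm = \mcl \oplus \fcl$. Since $\Phi = 0$ almost everywhere on $\mathbb{T}$, the Toeplitz term vanishes, $T_\Phi = 0$, and hence $T_n F = \sum_{i=1}^{n} \langle F, G_i \rangle H_i$ for every $F \in \hdcm$. Because $\{H_i\}_{i=1}^{n}$ is orthonormal and therefore linearly independent, the vector $\sum_{i=1}^{n} \langle F, G_i \rangle H_i$ is zero precisely when each scalar coefficient $\langle F, G_i \rangle$ is zero. First, then, I would conclude that $F \in \ker T_n$ if and only if $F \perp G_i$ for every $i$, which gives $\ker T_n = \hdcm \ominus \bigvee\{G_1, \ldots, G_n\}$.

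Next I would put $\mcl = \ker T_n$ and propose the defect space $\fcl = \bigvee\{G_1, \ldots, G_n\}$. Orthonormality of $\{G_i\}_{i=1}^{n}$ guarantees that these vectors are linearly independent, so $\dim \fcl = n$; moreover, by the description of $\mcl$ just obtained we have $\fcl = \hdcm \ominus \mcl$, so that $\fcl$ is orthogonal to $\mcl$ and $\mcl \oplus \fcl = \hdcm$. To verify the defining property of near $S^*$-invariance with defect $n$, take any $F \in \mcl$ with $F(0) = 0$; then $S^* F \in \hdcm = \mcl \oplus \fcl$, which is exactly the membership required. Since $\fcl$ has dimension $n$, this shows that $\ker T_n$ is nearly $S^*$-invariant with defect $n$ and that $\fcl$ serves as the defect space.

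The argument carries essentially no genuine obstacle, as it simply extends verbatim the reasoning already indicated for $T_2$: once $T_\Phi = 0$, the containment $S^* F \in \mcl \oplus \fcl$ is automatic from $\mcl \oplus \fcl = \hdcm$. The only two points that deserve attention are structural rather than computational. First, it is the orthonormality (hence linear independence) of $\{H_i\}_{i=1}^{n}$ that forces $\ker T_n$ to be exactly the common orthogonal complement of the $G_i$; without independence the kernel could be strictly larger. Second, it is the orthonormality of $\{G_i\}_{i=1}^{n}$ that pins the dimension of $\fcl$ down to exactly $n$, delivering the claimed value of the defect rather than merely an upper bound.
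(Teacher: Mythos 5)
Your proof is correct and follows essentially the same route as the paper: identify $\ker T_n=\hdcm\ominus\bigvee\{G_1,\ldots,G_n\}$ (the paper does this for $T_2$ and states the general case), and then observe that $S^*F\in\hdcm=\mcl\oplus\fcl$ automatically, with $\fcl=\bigvee\{G_1,\ldots,G_n\}$ of dimension $n$ by orthonormality of the $G_i$. Your explicit justification of the kernel identification via linear independence of the $H_i$ is a small but welcome addition of rigor over the paper's sketch.
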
  
Next we will see a nice application of the following theorem obtained by us \cite{CDP} as well as independently by Ryan O’Loughlin in \cite{OR} to understand the kernel of perturbed Toeplitz operator in a better way in terms of backward shift invariant subspaces. Before that let us recall that theorem concerning nearly $S^*$ invariant subspaces with defect $p$ on vector valued Hardy spaces $\hdcm$.

\begin{thm} \cite [ Theorem 3.5 ]{CDP} (see also \cite[Theorem 3.4]{OR})\label{a}
	Let $\mcl$ be a closed subspace that is nearly $S^*$-invariant with defect $p$ in $\hdcm$ and let $\{E_1,E_2,.....E_p \}$ be any orthonormal basis for the $p$-dimensional defect space $\fcl$. Let $\{W_1,W_2,\ldots,W_r\}$ be an orthonormal basis of $\wcl:=\mcl\ominus(\mcl\cap z\hdcm)$ and let $F_0$ be the $m\times r$ matrix whose columns are $W_1,W_2,\ldots,W_r$. Then\vspace*{0.1in}\\
	(i) in the case where there are functions in $\mcl$ that do not vanish at $0$,
	\begin{equation}\label{main1}
	\mcl= \Big\{F : F(z)= F_0(z)K_0(z)+ \sum_{j=1}^{p} zk_j(z)E_j(z) : (K_0,k_1,\ldots,k_p)\in \kcl \Big\},
	\end{equation}
	where 
	$\kcl \subset \hdcr \times \underbrace{\hdcc\times\cdots\times \hdcc}_{p} $ is a closed $S^*\oplus\cdots\oplus S^*$- invariant subspace of the vector valued Hardy space $ H ^2_{\mathbb{C}^{r+p}}(\mathbb{D})$ and $$\norm{F}^2=\norm{K_0}^2+\sum_{j=1}^{p}\norm{k_j}^2 .$$
	
	\noindent (ii) In the case where all the functions in $\mcl $ vanish at $0$,
	\begin{equation}\label{main2}
	\mcl= \Big\{F : F(z)= \sum_{j=1}^{p} zk_j(z)E_j(z) : (k_1,\ldots,k_p)\in \mathcal{K} \Big\},
	\end{equation}
	with the same notion as in $(i)$ except that $\mathcal{K}$ is now a closed 	$S^*\oplus \cdots\oplus S^*$- invariant subspace of the vector valued Hardy space $ H ^2_{\mathbb{C}^{p}}(\mathbb{D})$ and $$\norm{F}^2=\sum_{j=1}^{p}\norm{k_j}^2 .$$
	Conversely, if a closed subspace $\mcl$ of the vector valued Hardy space $\hdcm$ has a representation like $(i)$ or $(ii)$ as above, then it is a nearly $S^*$-invariant subspace of defect $p$.
\end{thm}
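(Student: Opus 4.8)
The plan is to adapt the Hitt--Sarason decomposition, in the vector-valued form of Chalendar--Chevrot--Partington, to the finite-defect setting. First I would record the orthogonal splitting $\mcl = \wcl \oplus (\mcl \cap z\hdcm)$ that is immediate from the definition of $\wcl$, and observe that the evaluation $W \mapsto W(0)$ is injective on $\wcl$: if $W \in \wcl$ satisfies $W(0)=0$ then $W \in \mcl \cap z\hdcm$ while simultaneously $W \perp \mcl \cap z\hdcm$, so $W=0$. Hence $r=\dim\wcl \le m$ is finite, the $m\times r$ matrix $F_0$ with columns $W_1,\dots,W_r$ is well defined, and --- a point I will reuse in the converse --- the constant matrix $F_0(0)\colon \C^r\to\C^m$ is injective. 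I would also use the freedom in the definition to take the defect space $\fcl=\bigvee\{E_1,\dots,E_p\}$ orthogonal to $\mcl$, so that every projection below is orthogonal.

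The core of the forward direction is an iterative extraction of Taylor data. Given $F\in\mcl$, put $F^{(0)}=F$; at stage $n$ write $F^{(n)}=W^{(n)}+(F^{(n)}-W^{(n)})$ with $W^{(n)}=P_{\wcl}F^{(n)}$ and $F^{(n)}-W^{(n)}\in\mcl\cap z\hdcm$. The latter vanishes at $0$, so the hypothesis that $\mcl$ is nearly $S^*$-invariant with defect $p$ gives $S^*(F^{(n)}-W^{(n)})\in\mcl\oplus\fcl$; setting $c_{n,j}=\langle S^*(F^{(n)}-W^{(n)}),E_j\rangle$ and $F^{(n+1)}=S^*(F^{(n)}-W^{(n)})-\sum_j c_{n,j}E_j\in\mcl$, I may record
\[ F^{(n)}=W^{(n)}+zF^{(n+1)}+\sum_{j=1}^{p}c_{n,j}\,zE_j. \]
Writing $W^{(n)}=F_0a^{(n)}$ for a vector $a^{(n)}\in\C^r$ and unrolling $N$ times gives $F=\sum_{n=0}^{N-1}z^nW^{(n)}+\sum_{j=1}^{p}\big(\sum_{n=0}^{N-1}c_{n,j}z^n\big)zE_j+z^NF^{(N)}$, which suggests the candidates $K_0(z)=\sum_{n\ge0}z^na^{(n)}$ and $k_j(z)=\sum_{n\ge0}c_{n,j}z^n$, assembling the first two sums into $F_0K_0$ and $\sum_j zk_jE_j$.

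I expect the main obstacle to be the simultaneous proof that the tail $z^NF^{(N)}$ disappears and that the isometric identity holds. Orthogonality of the three summands at each stage (using $E_j\perp\mcl$, the isometry of multiplication by $z$, and orthonormality of $\{W_i\}$ and $\{E_j\}$) gives $\|F^{(n)}\|^2=|a^{(n)}|^2+\sum_j|c_{n,j}|^2+\|F^{(n+1)}\|^2$; telescoping bounds $\sum_{n<N}\big(|a^{(n)}|^2+\sum_j|c_{n,j}|^2\big)$ by $\|F\|^2$, so $K_0\in\hdcr$, each $k_j\in\hdcc$, and the extracted series converge in $H^2$. Then $z^NF^{(N)}=F-\sum_{n<N}z^nW^{(n)}-\sum_j\big(\sum_{n<N}c_{n,j}z^n\big)zE_j$ converges in norm; but $z^NF^{(N)}\in z^N\hdcm$ has its first $N$ Taylor coefficients zero, so the limit has all coefficients zero and is therefore $0$. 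Since $\|F^{(N)}\|=\|z^NF^{(N)}\|\to0$, the representation $F=F_0K_0+\sum_jzk_jE_j$ follows and the telescoped identity becomes exactly $\|F\|^2=\|K_0\|^2+\sum_j\|k_j\|^2$.

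It remains to identify $\kcl$ and settle the converse. The norm identity makes $F\mapsto(K_0,k_1,\dots,k_p)$ an isometry, so its image $\kcl\subset H^2_{\C^{r+p}}(\D)$ is closed; re-running the extraction on $F^{(1)}\in\mcl$ shows its data are $(S^*K_0,S^*k_1,\dots,S^*k_p)$, giving the claimed $S^*\oplus\cdots\oplus S^*$-invariance. Case (ii) is the degeneration $\wcl=\{0\}$: if every function of $\mcl$ vanishes at $0$ then $\mcl\subseteq z\hdcm$, so $\mcl\cap z\hdcm=\mcl$, $r=0$, the $F_0K_0$ block drops out, and $\kcl\subset H^2_{\C^p}(\D)$. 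For the converse I would take $F\in\mcl$ with $F(0)=0$ and compute; writing $K_0=zS^*K_0+K_0(0)$ and $k_j=zS^*k_j+k_j(0)$ one finds $S^*F=\big(F_0S^*K_0+\sum_jzS^*k_jE_j\big)+(S^*F_0)K_0(0)+\sum_jk_j(0)E_j$. The parenthesized term lies in $\mcl$ because $S^*$-invariance of $\kcl$ puts $(S^*K_0,S^*k_j)$ back in $\kcl$; the key point is that $F(0)=F_0(0)K_0(0)=0$ together with injectivity of $F_0(0)$ forces $K_0(0)=0$, killing the middle term, so $S^*F\in\mcl\oplus\fcl$ with $\fcl=\bigvee\{E_1,\dots,E_p\}$, which is precisely the statement that $\mcl$ is nearly $S^*$-invariant with defect $p$.
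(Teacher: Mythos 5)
Two preliminary remarks: this paper does not actually prove Theorem~\ref{a}; it is recalled from \cite[Theorem 3.5]{CDP} (see also \cite[Theorem 3.4]{OR}), whose proofs follow the Hitt--Sarason iteration, exactly the scheme you adopt. Within that scheme, your structural work is correct and matches the cited arguments: the splitting $\mcl=\wcl\oplus(\mcl\cap z\hdcm)$, injectivity of evaluation at $0$ on $\wcl$ (hence $r\le m$ and injectivity of $F_0(0)$), the extraction recursion with the defect space taken orthogonal to $\mcl$, the telescoped identity $\|F\|^2=\sum_{n<N}\bigl(|a^{(n)}|^2+\sum_j|c_{n,j}|^2\bigr)+\|F^{(N)}\|^2$, the observation that the data of $F^{(1)}$ is the backward shift of the data of $F$ (which gives the $S^*\oplus\cdots\oplus S^*$-invariance of $\kcl$), the degeneration to case (ii), and the converse computation using $K_0(0)=0$ are all sound.

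The genuine gap is the sentence asserting that $z^NF^{(N)}=F-\sum_{n<N}z^nW^{(n)}-\sum_j\bigl(\sum_{n<N}c_{n,j}z^n\bigr)zE_j$ \emph{converges in norm}. What you have proved is convergence of the coefficient data, $K_0^{(N)}\to K_0$ in $\hdcr$ and $k_j^{(N)}\to k_j$ in $\hdcc$; this does not imply norm convergence of $F_0K_0^{(N)}$ or of $zk_j^{(N)}E_j$ in $\hdcm$, because the entries of $F_0$ and the functions $E_j$ are merely $H^2$ functions, and multiplication by an $H^2$ function is unbounded (for instance, $w=(1-z)^{-1/3}\in\hdcc$, yet the products of $w$ with the partial sums of $w$ cannot converge in $\hdcc$, since their pointwise limit $w^2=(1-z)^{-2/3}$ is not in $\hdcc$). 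Worse, in your construction norm convergence of $z^NF^{(N)}$ is \emph{equivalent} to $\|F^{(N)}\|\to0$: the sequence $z^NF^{(N)}$ tends to $0$ weakly (bounded norms, first $N$ Taylor coefficients zero), so if it converges in norm its limit must be $0$; and by the telescoped identity, $\|F^{(N)}\|\to0$ is in turn equivalent to the isometric identity $\|F\|^2=\|K_0\|^2+\sum_j\|k_j\|^2$. So this one sentence silently assumes precisely the hardest assertion of the theorem. What can be salvaged cheaply is only the pointwise representation: for fixed $z\in\D$, $|z^NF^{(N)}(z)|\le|z|^N\|F\|(1-|z|^2)^{-1/2}\to0$ while $K_0^{(N)}(z)\to K_0(z)$ and $k_j^{(N)}(z)\to k_j(z)$, giving $F=F_0K_0+\sum_jzk_jE_j$ on $\D$ together with the inequality $\|K_0\|^2+\sum_j\|k_j\|^2\le\|F\|^2$. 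The reverse inequality --- equivalently the vanishing of $\lim_N\|F^{(N)}\|$, which your argument also needs for the closedness of $\kcl$ --- is the finite-defect, vector-valued analogue of Hitt's isometry theorem; it is the part where \cite{CCP}, \cite{CGP}, \cite{CDP}, \cite{OR} spend real effort, and your proposal must either supply such an argument or cite it.
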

 For simplicity we will deal with rank one perturbation of Toeplitz operator and let it be denoted by $T$ , that is $$T=T_{\Phi}+\langle .,G\rangle H$$ with $\norm G_2=1~and~S^*H\neq 0$. Now as a application of the above Theorem our aim is to represent the kernel of the operator $T$ in some special cases. It should also be observed that we can find $\kcl$ as the largest $S^*$ invariant subspace in $\hdcm $ like the scalar case such as $$F_0(z)S^{*n}K_0(z)+z \sum_{j=1}^{p} S^{*n}k_j(z)E_j(z) \in \mcl ~or~z\sum_{j=1}^{p} S^{*n}k_j(z)E_j(z) \in \mcl$$ for all $n\in \mathbb{N} \cup \{0\}$.
 In the case $\Phi =0$, we have $$\mcl =Ker T=\hdcm \ominus \langle G\rangle$$ 
which is  nearly $S^*$ invariant with defect space $\fcl =\langle G\rangle$ by Theorem[\ref{b}]. Assume furthermore that the function $G\in H^\infty(\D,\mathcal{L}(\C,\C^m))=H_{\C^m}^\infty(\D)$.
Now consider $F_i=P_{\mcl}(k_0 \otimes e_i),$ where $k_0$ is the reproducing kernel at $0$ and $\{e_i:1\leq i\leq m\}$ is a standard orthonormal basis of $\C^m,$ generate the subspace $\wcl$ in Theorem [\ref{a}]. Without loss of generality we assume that $\{F_1,F_2,\ldots F_r\}~(where~r\leq m$) is a basis of $\wcl$.
By using Gram-Schmidt orthonormalization we find an orthonormal basis of $\wcl$ as follows:
$W_1=C_{11}F_1,~W_2=C_{21}F_1+C_{22}F_2,~\ldots ,~W_r=C_{r1}F_1+C_{r2}F_2+\ldots +C_{rr}F_r$, where the constant $C_{ij}$ can be determined via the process of orthonormalization. Now if we consider  $G=(g_1,g_2,\ldots ,g_m)\in H_{\C^m}^\infty(\D)~with~\norm G _2=1$ and $\mcl=\hdcm \ominus \langle G\rangle$, then for any $i\in \{1,2,\ldots ,m\}$ we have $$F_i=P_{\mcl}(k_0\otimes e_i)=(-\overline{g_i(0)}g_1,-\overline{g_i(0)}g_2,\ldots ,1-\overline{g_i(0)}g_i,\ldots ,-\overline{g_i(0)}g_m).$$ Therefore by Theorem\ref{a}, the $m\times r$ matrix $F_0$ whose columns are $\{W_1,W_2,\ldots ,W_r\}$ has the following representation
\begin{equation}
F_0=
\begin{bsmallmatrix}  
C_{11}(1-\overline{g_1(0)}g_1)& C_{21}(1-\overline{g_1(0)}g_1)+C_{22}(-\overline{g_2(0)}g_1)&\cdots&C_{r1}(1-\overline{g_1(0)}g_1)+\cdots +C_{rr}(-\overline{g_r(0)}g_1)\\
C_{11}(-\overline{g_1(0)}g_2)& C_{21}(-\overline{g_1(0)}g_2)+C_{22}(1-\overline{g_2(0)}g_2)&\cdots& C_{r1}(-\overline{g_1(0)}g_2)+\cdots +C_{rr}(-\overline{g_r(0)}g_2) \\
\vdots&\vdots&\ddots&\vdots\\
C_{11}(-\overline{g_1(0)}g_m) & C_{21}(-\overline{g_1(0)}g_m)+C_{22}(-\overline{g_2(0)}g_m)&\cdots&C_{r1}(-\overline{g_1(0)}g_m)+\cdots +C_{rr}(-\overline{g_r(0)}g_m)\\
\end{bsmallmatrix}_{m\times r}
\end{equation}
According to Theorem\ref{a}, $\mcl$ has the following representation :
\begin{enumerate}[(1)]
	\item In the case when $\wcl \neq \{0\}$, $$\mcl =\{F:F(z)=F_0(z)K_0(z)+zk_1(z)G(z):(K_0,k_1)\in \kcl \subseteq \hdcr \times \hdcc\}.$$ Suppose $ |G|^2=|g_1|^2+|g_2|^2+\ldots +|g_m|^2$ and if we consider
	\begin{align}
	G_0 &= \begin{bmatrix}
	\overline{C}_{11}P(g_1-g_1(0)|G|^2) \\
	\overline{C}_{21}P(g_1-g_1(0)|G|^2)+\overline{C}_{22}P(g_2-g_2(0)|G|^2) \\
	\vdots \\
	\overline{C}_{r1}P(g_1-g_1(0)|G|^2)+\cdots +\overline{C}_{rr}P(g_r-g_r(0)|G|^2)
	\end{bmatrix} \in\hdcr
	\end{align}
	and $g=P(\overline{z}|G|^2) \in \hdcc $, then the $S^*\oplus S^*$ invariant subspace corresponding to $\mcl$ is :
	$$\kcl =\{(K_0,k_1)\in \hdcr \times \hdcc:\langle K_0,z^nG_0\rangle _{\hdcr}+\langle k_1,z^ng\rangle _{\hdcc}=0~for~n\in \mathbb{N}\cup \{0\}\}.$$

\item In case $\wcl =\{0\}$, $$\mcl =\{F:F(z)=zk_1(z)G(z):k_1\in \kcl \}$$ with $S^*$ invariant subspace is $$\kcl =\{k_1\in \hdcc :\langle k_1,z^ng\rangle _{\hdcc}=0 ~for~n\in \mathbb{N}\cup\{0\}\}.$$ 
\end{enumerate}
\begin{rmrk}
	If $G=(g_1,g_2,\ldots ,g_m)$ is an arbitary element of $\hdcm$, then $\mcl=\hdcm \ominus \langle G\rangle$ will be of the form :
	\begin{enumerate}
		\item In the case when $\wcl \neq \{0\}$, $$\mcl =\{F:F(z)=F_0(z)K_0(z)+zk_1(z)G(z):(K_0,k_1)\in \kcl \subseteq \hdcr \times \hdcc\},$$ where the $S^*\oplus S^*$ invariant subspace corresponding to $\mcl$ is 
		$$\kcl =\{(K_0,k_1)\in \hdcr \times \hdcc:\langle F_0S^{*n}K_0+zS^{*n}k_1G, G \rangle =0~for~n\in \mathbb{N}\cup \{0\}\}.$$
		\item In case $\wcl =\{0\}$, $$\mcl =\{F:F(z)=zk_1(z)G(z):k_1\in \kcl \},$$ where the $S^*$ invariant subspace corresponding to $\mcl$ is  $$\kcl =\{k_1\in \hdcc :\langle zS^{*n}k_1G,G\rangle =0 ~for~n\in \mathbb{N}\cup\{0\}\}.$$ 
	\end{enumerate}
	
\end{rmrk}

Next we give some concrete examples through which we calculate the space $\kcl$ explicitly. To proceed further we need the following useful  result in the vector valued Hardy space $\hdcm$ setting.
\begin{ppsn}\label{c}
	Let $\Phi \in H^\infty(\D,\mathcal{L}(\mathbb{C}^m))\backslash\{0\} $ be of the form :
\begin{equation}
   \Phi=
   \begin{bmatrix}  
   \phi _1&0&\cdots&0\\
   0&\phi_2&\cdots&0\\
   \vdots&\vdots&\ddots&\vdots\\
   0&0& \cdots&\phi_m\\
   \end{bmatrix}_{m\times m,} 
\end{equation}
and let $\Theta$ be the inner part of $\Phi$. Then $Ker T_{\Phi ^*}=\mathcal{K}_{\Theta}=\hdcm\ominus \Theta \hdcm$. In particular if, 
\begin{equation}
\Psi=
\begin{bmatrix}  
\psi _1&0&\cdots&0\\
0&\psi_2&\cdots&0\\
\vdots&\vdots&\ddots&\vdots\\
0&0& \cdots&\psi_m\\
\end{bmatrix}_{m\times m,} \in H^\infty (\D,\mathcal{L}({\C}^m))
\end{equation}
with each $\psi _i$ is an outer function in $\hdcc$, then $T_{\Psi ^*}$ is an injective Toeplitz operator. 
	
\end{ppsn}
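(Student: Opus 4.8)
The plan is to exploit the diagonal structure of $\Phi$ to reduce the matrix problem to $m$ independent scalar computations, and then reassemble. Since $\Phi = \mathrm{diag}(\phi_1,\ldots,\phi_m)$, the induced Toeplitz operator $T_\Phi$ on $\hdcm$ is the diagonal operator $\mathrm{diag}(T_{\phi_1},\ldots,T_{\phi_m})$ under the identification $\hdcm \equiv \hdcc \oplus \cdots \oplus \hdcc$; hence $T_{\Phi^*} = T_\Phi^* = \mathrm{diag}(T_{\phi_1}^*,\ldots,T_{\phi_m}^*) = \mathrm{diag}(T_{\overline{\phi_1}},\ldots,T_{\overline{\phi_m}})$. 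Consequently $\mathrm{Ker}\,T_{\Phi^*}$ splits as the orthogonal direct sum $\bigoplus_{i=1}^m \mathrm{Ker}\,T_{\overline{\phi_i}}$.

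The key step is the classical scalar identity $\mathrm{Ker}\,T_{\overline{\phi_i}} = \hdcc \ominus \theta_i\hdcc$, where $\theta_i$ is the inner factor of $\phi_i$. I would derive this from $\mathrm{Ker}\,T_{\phi_i}^* = (\mathrm{Ran}\,T_{\phi_i})^\perp = (\overline{\mathrm{Ran}\,T_{\phi_i}})^\perp$ together with a computation of the range closure. Writing $\phi_i = \theta_i u_i$ with $\theta_i$ inner and $u_i$ outer, the outer factor is cyclic, so $u_i\hdcc$ is dense in $\hdcc$; since multiplication by $\theta_i$ is an isometry it commutes with closures, whence $\overline{\phi_i\hdcc} = \theta_i\,\overline{u_i\hdcc} = \theta_i\hdcc$. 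Thus $\mathrm{Ker}\,T_{\overline{\phi_i}} = (\theta_i\hdcc)^\perp = \hdcc \ominus \theta_i\hdcc$.

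Assembling the pieces, $\mathrm{Ker}\,T_{\Phi^*} = \bigoplus_{i=1}^m (\hdcc \ominus \theta_i\hdcc)$. On the other hand, the inner part of the diagonal function $\Phi$ is the diagonal inner multiplier $\Theta = \mathrm{diag}(\theta_1,\ldots,\theta_m)$, so $\Theta\hdcm = \bigoplus_{i=1}^m \theta_i\hdcc$ and therefore $\mathcal{K}_{\Theta} = \hdcm \ominus \Theta\hdcm = \bigoplus_{i=1}^m (\hdcc \ominus \theta_i\hdcc)$, which matches and gives $\mathrm{Ker}\,T_{\Phi^*} = \mathcal{K}_{\Theta}$. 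Equivalently, one may run the argument directly at the matrix level: $\overline{\mathrm{Ran}\,T_\Phi}$ is the shift-invariant subspace $\bigoplus_i \theta_i\hdcc = \Theta\hdcm$, whose orthocomplement is $\mathcal{K}_{\Theta}$ by Beurling--Lax--Halmos (Theorem \ref{a1}). For the ``in particular'' assertion, when every $\psi_i$ is outer its inner factor is a unimodular constant, so $\Theta\hdcm = \hdcm$ and $\mathcal{K}_{\Theta} = \{0\}$; hence $\mathrm{Ker}\,T_{\Psi^*} = \{0\}$ and $T_{\Psi^*}$ is injective.

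The main obstacle is the careful justification of the range-closure identity $\overline{\phi_i\hdcc} = \theta_i\hdcc$, which hinges on the cyclicity (density of $u_i\hdcc$) of the outer factor, together with pinning down precisely what ``inner part'' means for the diagonal matrix $\Phi$ — which I would take to be the diagonal of the scalar inner factors, equivalently the Beurling--Lax--Halmos inner multiplier of $\overline{\mathrm{Ran}\,T_\Phi}$. Everything else is routine bookkeeping with the orthogonal decomposition of $\hdcm$.
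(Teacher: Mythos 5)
Your proof is correct, but it follows a genuinely different route from the paper's. The paper argues in the opposite logical order: it first proves the ``in particular'' statement directly --- for $F=(f_1,\ldots,f_m)\in \mathrm{Ker}\,T_{\Psi^*}$ one has $P(\overline{\psi_i}f_i)=0$ for each $i$, and outerness of $\psi_i$ forces $f_i=0$ --- and then deduces the kernel identification from an operator factorization: writing $\Phi=\Psi\Theta$ with $\Psi$ diagonal outer and $\Theta$ diagonal inner, it uses $T_{\Phi^*}=T_{\Psi^*}T_{\Theta^*}$ (the Toeplitz product rule, Theorem \ref{d}, applicable since $\Psi\in H^\infty$ and the diagonal factors commute), so that injectivity of $T_{\Psi^*}$ gives $\mathrm{Ker}\,T_{\Phi^*}=\mathrm{Ker}\,T_{\Theta^*}$, and the identity $\mathrm{Ker}\,T_{\Theta^*}=\mathcal{K}_\Theta$ is quoted as known. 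You instead split $T_{\Phi^*}$ componentwise, identify each scalar kernel via $\mathrm{Ker}\,T_{\overline{\phi_i}}=\bigl(\overline{\mathrm{Ran}\,T_{\phi_i}}\bigr)^{\perp}$ together with the range-closure computation $\overline{\phi_i\hdcc}=\theta_i\hdcc$ (cyclicity of the outer factor plus the fact that multiplication by an inner function is an isometry), and then reassemble; injectivity of $T_{\Psi^*}$ falls out as the degenerate case in which all inner factors are unimodular constants. What each approach buys: yours is more self-contained and elementary, since it proves rather than quotes the model-space identity $\mathrm{Ker}\,T_{\Theta^*}=\mathcal{K}_\Theta$ in the diagonal setting, avoids the Toeplitz product formula entirely, and obtains the injectivity claim as a corollary instead of needing it as a prerequisite; the paper's factorization argument, on the other hand, is less tied to diagonality --- once a factorization $\Phi=\Psi\Theta$ with $T_{\Psi^*}$ injective is available, it applies verbatim to non-diagonal symbols, which is the natural direction of generalization (and is indeed the pattern reused in Section 4). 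One caveat shared by both proofs: each implicitly assumes every $\phi_i\not\equiv 0$, since otherwise the inner--outer factorization, and hence ``the inner part of $\Phi$,'' is undefined; this is slightly stronger than the stated hypothesis $\Phi\neq 0$.
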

\begin{proof}
	Let $\Psi$  be of the above form with each $\psi _i $ is an outer function and consider the element $F=(f_1,f_2,\ldots ,f_m)\in KerT_{\Psi ^*}$. Then for all  $i\in \{1,2,\ldots ,m\},~P(\overline{\psi} _i f_i)=0$ which implies that $\overline{\psi} _i f_i \in \overline{H}^2_0$. Since each $\psi _i$ is outer, then $f_i \in \overline{H}^2_0 $ and hence each $f_i=0$. Therefore $F=0$ and hence $T_{\Psi ^*}$ is injective.\\
	
	Now using the cannonical factorization of each $\phi _i=\psi _i\theta _i$, where $\psi _i$ is outer and $\theta _i$ is inner, we have the following decomposition of $\Phi$:
	\begin{equation}
		\Phi=
	\begin{bmatrix}  
	\phi _1&0&\cdots&0\\
	0&\phi_2&\cdots&0\\
	\vdots&\vdots&\ddots&\vdots\\
	0&0& \cdots&\phi_m\\
	\end{bmatrix}=
	\begin{bmatrix}  
	\psi _1&0&\cdots&0\\
	0&\psi_2&\cdots&0\\
	\vdots&\vdots&\ddots&\vdots\\
	0&0& \cdots&\psi_m\\
	\end{bmatrix}\times
	\begin{bmatrix}  
	\theta _1&0&\cdots&0\\
	0&\theta_2&\cdots&0\\
	\vdots&\vdots&\ddots&\vdots\\
	0&0& \cdots&\theta_m \\
	\end{bmatrix}
	=\Psi \boldsymbol{\cdot}\Theta \text{ (say)}.
	\end{equation}
Therefore $\Theta$ is an inner multiplier and $\Psi$ is an outer function in the sense of V.I.Smirnov \cite{KK}. We call $\Theta$ as the inner part of $\Phi$ and $\Psi$ as the outer part of $\Phi$.
	Note that $KerT_{\Phi ^*}=\{F\in\hdcm :T_{\Phi ^*}(F)=0 \}=\{F\in \hdcm :T_{\Psi ^*}T_{\Theta ^*}(F)=0\}=\{F\in \hdcm :T_{\Theta ^*}(F)=0\}$~(as $T_{\Psi ^*} ~is~injective$) $=\mathcal{K}_\Theta$.

\end{proof}

\begin{xmpl}
	\begin{enumerate}[(i)]
		\item Let $G=(1,0,\ldots ,0)=k_0\otimes e_1$, then $\mcl =z\hdcm \oplus span\{e_2,e_3,\ldots ,e_m\}.$ Thus $\wcl =span\{e_2,e_3,\ldots ,e_m\}$ and hence we consider
		\begin{equation}
		F_0=
			F_0=
		\begin{bmatrix}  
		0&0&\cdots&0\\
		1&0&\cdots&0\\
		0&1&\cdots&0\\
		\vdots&\vdots&\ddots&\vdots\\
		0&0&\cdots&1 \\
		\end{bmatrix}_{m\times (m-1)}.
	\end{equation}
Therefore $$\mcl =\{F:F(z)=F_0(z)K_0(z)+k_1(z)\otimes e_1:(K_0,k_1) \in \kcl \subset  H^2_{\mathbb{C}^{m-1}}(\D) \times \hdcc \},$$ where $\kcl =\hdcm$  is a trivial $S^*$ invariant subspace.
\item 	
Let $G=\dfrac{1}{\sqrt{m}}(\theta _1,\theta _2,\ldots ,\theta_m)$ with each $\theta _i$ is a non constant inner function in $\hdcc$. Now consider 	
\begin{equation}
\Theta=\dfrac{1}{\sqrt{m}}
\begin{bmatrix}  
\theta _1&0&\cdots&0\\
0&\theta _2&\cdots&0\\
\vdots&\vdots&\ddots&\vdots\\
0&0&\cdots&\theta_m \\
\end{bmatrix}_{m\times m}. 
\end{equation}	
Then $\mcl =\hdcm \ominus \langle G \rangle =\mathcal{K}_{\Theta}\oplus z\Theta \hdcm $. Thus 	
\begin{equation}
F_0=
\begin{bsmallmatrix}  
C_{11}(1-\overline{\theta_1(0)}\theta_1) & C_{21}(1-\overline{\theta_1(0)}\theta_1)+C_{22}(-\overline{\theta_2(0)}\theta_1) &\cdots& C_{m1}(1-\overline{\theta_1(0)}\theta_1)+\ldots +C_{mm}(-\overline{\theta_m(0)}\theta_1) \\
C_{11}(-\overline{\theta_1(0)}\theta_2) & C_{21}(-\overline{\theta_1(0)}\theta_2)+C_{22}(1-\overline{\theta_2(0)}\theta_2) &\cdots&C_{m1}(-\overline{\theta_1(0)}\theta_2)+\ldots +C_{mm}(-\overline{\theta_m(0)}\theta_2) \\
\vdots&\vdots&\ddots&\vdots\\
C_{11}(-\overline{\theta_1(0)}\theta_m) & C_{21}(-\overline{\theta_1(0)}\theta_m)+C_{22}(-\overline{\theta_2(0)}\theta_m) &\cdots& C_{m1}(-\overline{\theta_1(0)}\theta_m)+\ldots +C_{mm}(1-\overline{\theta_m(0)}\theta_m) \\
\end{bsmallmatrix}_{m\times m}
\end{equation}	
and,
\begin{align}
G_0 &= \begin{bmatrix}
\overline{C}_{11}(\theta_1-m\theta_1(0)) \\
\overline{C}_{21}(\theta_1-m\theta_1(0))+\overline{C}_{22}(\theta_2-m\theta_2(0)) \\
\vdots \\
\overline{C}_{m1}(\theta_1-m\theta_1(0))+\ldots +\overline{C}_{mm}(\theta_m-m\theta_m(0))
\end{bmatrix} \in\hdcm, ~g=0.
\end{align}
Now using the Case(i), $\mathcal{M}$ has the following representation: $$\mcl =\{F:F(z)=F_0(z)K_0(z)+zk_1(z)G(z):(K_0,k_1) \in \kcl \subset  H^2_{\mathbb{C}^{m}}(\D) \times \hdcc \}$$ with the $S^*$ invariant subspace $\kcl =\mathcal{K}_\xi \times \hdcc$, where $\xi$ is the inner part of the following $H^\infty(\D,\mathcal{L}(\mathbb{C}^m))$ function: 
\begin{equation*}
\zeta =
\begin{bmatrix}  
\theta _1-m\theta_1(0)&0&\cdots&0\\
0&\sum_{i=1}^{2}\theta _i-m\theta_i(0)&\cdots&0\\
\vdots&\vdots&\ddots&\vdots\\
0&0&\cdots&\sum_{i=1}^{m}\theta _i-m\theta_i(0) \\
\end{bmatrix}_{m\times m}. 
\end{equation*}
Using Proposition \ref{c} one can easily check that $\kcl$ is $S^*$ invariant subspace of $\hdcmone$.
\item If we consider $G(z)=(\dfrac{1+z^k}{\sqrt{2}},0,\cdots ,0)~for~ k\geq 1$, then $$\mcl=\hdcm \ominus \langle G\rangle =span\{1-z^k,z,z^2,\ldots z^{k-1},z^{k+1},\ldots\}\oplus \hdcc \oplus \hdcc \oplus \ldots \oplus \hdcc \subseteq\hdcm .$$ Therefore 
\begin{equation*}
F_0=
\begin{bmatrix}  
\dfrac{1-z^k}{\sqrt{2}}&0&\cdots&0\\[1pt]
0&1&\cdots&0\\
\vdots&\vdots&\ddots&\vdots\\
0&0&\cdots&1\\
\end{bmatrix}_{m\times m}, 
\begin{aligned}
G_0 &= \begin{bmatrix}
\frac{z^k}{2}\\
0\\
\vdots \\
0\\
\end{bmatrix} \in\hdcm, \quad\text{and} \quad g(z)=\frac{z^{k-1}}{2}\in \hdcc.
\end{aligned} 
\end{equation*}

Thus by using Case(i), $$\mcl =\{F:F(z)=F_0(z)K_0(z)+zk_1(z)G(z):(K_0,k_1) \in \kcl \subset  H^2_{\mathbb{C}^{m}}(\D) \times \hdcc \}$$ with the $S^*$ invariant subspace $$\kcl =\{(K_0,k_1)\in \hdcm \times \hdcc : (S^*)^{k-1}k_1(z) =-(S^*)^{k}\langle K,k_z\otimes e_1\rangle ,K_0\in \hdcm \}.$$
\item Now consider $G(z)=\sqrt{\dfrac{1-|\alpha |^2}{m}}(k_\alpha (z),k_\alpha (z),\ldots ,k_\alpha (z))$, where $k_\alpha$ is a reproducing kernel at $\alpha \in \D\backslash \{0\}$ in $\hdcc$. Then $$\mcl =Ker T=\hdcm \ominus
\langle G\rangle =\{F\in \hdcm :\sum_{i=1}^{m}f_i(\alpha)=0\}.$$ Since $P_\mcl(k_0\otimes e_1)$ is non zero, then $\wcl$ is non trivial and hence  $F_0$ has the same form as in (2.3) with each $g_i(z)=k_\alpha (z)$. In this case $G_0=0,~ g(z)=m\overline{\alpha}k_\alpha (z)$ which is an outer function. Moreover by using Case(i), $$\mcl =\{F:F(z)=F_0(z)K_0(z)+zk_1(z)G(z):(K_0,k_1) \in \kcl \subset  H^2_{\mathbb{C}^{r}}(\D) \times \hdcc \}$$ with the $S^*$ invariant subspace
$$\kcl =H^2_{\mathbb{C}^{r}}(\D) \times \{0\}.$$	
	\end{enumerate}
\end{xmpl}

Next we consider the case when $\Phi \in L^\infty (\mathbb{T},\mathcal{L}(\C ^m,\C^m))$ is non zero almost everywhere on $\mathbb{T}$ and with this assumption we consider three important subcases. To proceed further we need the following useful results :
\begin{thm}\label{d}
	For $\Phi , \Psi \in L^\infty (\mathbb{T},\mathcal{L}(\C ^m,\C^m))$; if either $\Psi ^* \in H^\infty _{\mathcal{L}(\C^m,\C^m)}(\D)~or ~\Phi \in H^\infty _{\mathcal{L}(\C^m,\C^m)}(\D)$, then $T_\Psi T_\Phi$ is a Toeplitz operator; in both cases $T_\Psi T_\Phi =T_{\Psi \Phi}$. 
	\end{thm}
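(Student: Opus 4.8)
The plan is to prove the identity $T_\Psi T_\Phi = T_{\Psi\Phi}$ directly in the analytic case and then deduce the co-analytic case by passing to adjoints. Throughout I write $P_m$ for the orthogonal projection of $L^2(\mathbb{T},\mathbb{C}^m)$ onto $\hdcm$, and I recall that $T_\Phi F = P_m(\Phi F)$ for $F \in \hdcm$. The only structural fact I need is that multiplication by an element of $H^\infty_{\mathcal{L}(\mathbb{C}^m,\mathbb{C}^m)}(\mathbb{D})$ maps $\hdcm$ into $\hdcm$; equivalently, $(I-P_m)(\Phi F)=0$ whenever $\Phi$ is analytic and $F \in \hdcm$.

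First I would treat the case $\Phi \in H^\infty_{\mathcal{L}(\mathbb{C}^m,\mathbb{C}^m)}(\mathbb{D})$. Here, for any $F \in \hdcm$, the product $\Phi F$ already lies in $\hdcm$, so $P_m(\Phi F) = \Phi F$ and hence $T_\Phi F = \Phi F$. Consequently
$$T_\Psi T_\Phi F = T_\Psi(\Phi F) = P_m(\Psi\Phi F) = T_{\Psi\Phi} F,$$
which is exactly the assertion. If one prefers to avoid the shortcut $P_m(\Phi F)=\Phi F$, the same identity follows from $(I-P_m)(\Phi F)=0$: then $\Psi(I-P_m)(\Phi F)=0$, so the extra term contributes nothing after applying $P_m$, giving $P_m\big(\Psi P_m(\Phi F)\big) = P_m(\Psi\Phi F)$.

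Next, for the case $\Psi^* \in H^\infty_{\mathcal{L}(\mathbb{C}^m,\mathbb{C}^m)}(\mathbb{D})$, I would dualize. Using the stated identity $T_\Phi^* = T_{\Phi^*}$ together with the reversal of products under adjoints,
$$(T_\Psi T_\Phi)^* = T_\Phi^* T_\Psi^* = T_{\Phi^*} T_{\Psi^*}.$$
Now $\Psi^*$ is analytic, so the first case applies to the product $T_{\Phi^*} T_{\Psi^*}$ (whose right-hand factor has an $H^\infty$ symbol), giving $T_{\Phi^*} T_{\Psi^*} = T_{\Phi^*\Psi^*}$. Since $\Phi^*\Psi^* = (\Psi\Phi)^*$, this reads $(T_\Psi T_\Phi)^* = T_{(\Psi\Phi)^*} = (T_{\Psi\Phi})^*$, and taking adjoints once more yields $T_\Psi T_\Phi = T_{\Psi\Phi}$, as required.

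This argument is essentially elementary, and I do not expect a deep obstacle. The only points requiring care are the projection bookkeeping in the first case — namely justifying that the inner projection $P_m$ can be dropped because $\Phi F$ is analytic — and then ensuring the adjoint reduction is set up with the factors in the correct order, so that $\Phi^*\Psi^*=(\Psi\Phi)^*$ matches the desired symbol. Once these are in place, both cases collapse to the single multiplicative identity $P_m(\Psi\Phi F)=T_{\Psi\Phi}F$.
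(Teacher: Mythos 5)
Your proof is correct and is essentially the argument the paper has in mind: the paper gives no proof at all, remarking only that it ``follows similarly as in the scalar valued case'' and leaving it to the reader, and your two steps --- the direct computation $T_\Psi T_\Phi F = P_m(\Psi\Phi F) = T_{\Psi\Phi}F$ when $\Phi$ is analytic (so that $T_\Phi F = \Phi F \in \hdcm$), followed by the adjoint reduction using $T_\Phi^* = T_{\Phi^*}$ and $\Phi^*\Psi^* = (\Psi\Phi)^*$ --- are exactly the standard scalar-case (Brown--Halmos type) argument transplanted to the vector-valued setting. There are no gaps; the projection bookkeeping and the order of factors in the adjoint step are both handled correctly.
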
 
The proof of the above theorem follows similarly as in the scalar valued case and hence left it to the reader. On the other hand it is important to observe that the converse of the theorem is not true in general. For example consider 
 \begin{equation}
 \Psi=
 \begin{bmatrix}  
e^{i\theta}&0&\cdots &0\\
0&0&\cdots&0\\
\vdots&\vdots&\ddots&\vdots\\
0&0&\cdots&0\\
\end{bmatrix}_{m\times m,}\text{, and }
\Phi=
\begin{bmatrix}  
0&0&\cdots &0\\
0&e^{-i\theta}&\cdots&0\\
\vdots&\vdots&\ddots&\vdots\\
0&0&\cdots&0\\
\end{bmatrix}_{m\times m}.
 \end{equation}
Then it is easy to observe that $\Psi \in H^\infty _{\mathcal{L}(\C^m,\C^m)}(\D)$ and $\Phi ^*\in H^\infty _{\mathcal{L}(\C^m,\C^m)}(\D)$ but $T_\Psi T_\Phi =0$ is a Toeplitz operator.
Now we denote 
 \begin{equation}
 {\bf{Z}}=
\begin{bmatrix}  
z&0&\cdots &0\\
0&z&\cdots&0\\
\vdots&\vdots&\ddots&\vdots\\
0&0&\cdots&z\\
\end{bmatrix}_{m\times m,} \in H^\infty _{\mathcal{L}(\C^m,\C^m)}(\D)  \quad \text{and hence}\quad T_{\bf{Z}} ^{*}=T_{{\bf{Z}^{^*}}}.
 \end{equation}
Thus  $$T_{{\bf{Z}}^{^*}}T_\Phi =T_{\bf{Z}^{^*}\Phi}=T_{\Phi \bf{Z}^{^*}}~,~\forall~\Phi \in L^\infty (\mathbb{T},\mathcal{L}(\C ^m,\C^m)).$$

Next we provide an equivalent condition on the element $F$ to be in the kernel of $T_2=T_\Phi +\sum\limits_{i=1}^{2}\langle .,G_i\rangle H_i$. To do that suppose $F \in KerT_2$ with $F(0)=0$. Then the following equivalent conditions hold.


\begin{align}
\nonumber & ~T_2(F)=0\\
\label{e}\Leftrightarrow ~&~T_\Phi (F) +\sum_{i=1}^{2}\langle F,G_i\rangle H_i =0\\
\nonumber \Leftrightarrow ~&~P_m\big(\Phi F +\sum_{i=1}^{2}\langle F,G_i\rangle H_i )=0\\
\nonumber \Leftrightarrow ~&~\Phi F +\sum_{i=1}^{2}\langle F,G_i\rangle H_i \in \overline{H}^2_0.
\end{align}
Applying $T_{\bf{Z}}^*$ on both sides of \ref{e} and using Theorem \ref{d}, we have the following equivalent conditions.
\begin{align}
\nonumber & ~T_{\Phi\bf Z^*}(F) + \sum_{i=1}^{2}\langle F,G_i\rangle S^*H_i =0\\
\nonumber \Leftrightarrow ~&~P_m\big(\Phi \dfrac{F}{z} +\sum_{i=1}^{2}\langle F,G_i\rangle S^*H_i )=0\\
\label{f}\Leftrightarrow ~&~T_{\Phi}\big( \dfrac{F}{z}) +\sum_{i=1}^{2}\langle F,G_i\rangle S^*H_i =0\\
\label{fi}\Leftrightarrow ~& ~\Phi \dfrac{F}{z} +\sum_{i=1}^{2}\langle F,G_i\rangle S^*H_i \in \overline{H}^2_0.
\end{align}
Now if we recapitulate our problem, actually we have to show the kernel of  $T_2$ is nearly $S^*$ invariant with finite defect and to do so we have to find a vector $V$ in some appropiate finite dimesional subspace $\fcl$ such that $$S^*F +V \in Ker T_2 \quad \text{with} \quad F\in KerT_2,~F(0)=0.$$ which is equivalent to the following equations 
\begin{align}
\nonumber & ~T_2(S^*F+V)=0\\
\label{g}\Leftrightarrow ~&~T_\Phi (\dfrac{F}{z}+V) +\sum_{i=1}^{2}\langle \dfrac{F}{z}+V,G_i\rangle H_i =0\\
\nonumber \Leftrightarrow ~&~P_m\big(\Phi (\dfrac{F}{z}+V)+\sum_{i=1}^{2}\langle \dfrac{F}{z}+V,G_i\rangle H_i )=0\\
\label{gi}\Leftrightarrow ~&~P_m\big(\Phi V-\sum_{i=1}^{2}\langle F,G_i\rangle S^*H_i+\sum_{i=1}^{2}\langle \dfrac{F}{z}+V,G_i\rangle H_i )=0,\text{(using ~\ref{fi}~)}\\
\label{hi}\Leftrightarrow ~&~\Phi V-\sum_{i=1}^{2}\langle F,G_i\rangle S^*H_i+\sum_{i=1}^{2}\langle \dfrac{F}{z}+V,G_i\rangle H_i  \in \overline{H}^2_0.
\end{align}
In the following three sections we are going to show that kernel of  $T_2$ is nearly $S^*$ invariant with finite defect in various important cases mentioned by Liang and Partington in \cite{YP}. Furthermore we also calculate the defect space $\mathcal{F}$ explicitly in those mentioned cases.

\section{Kernel of finte rank perturbation of Toeplitz operator having symbol an inner multiplier}
In this section we consider $\Phi =\Theta$ is an inner function. Then $T_\Theta $ is a nice Toeplitz operator. Our main aim is to show that the kernel of $T_n=T_{\Theta}+\sum\limits_{i=1}^{n}\langle .,G_i\rangle H_i$ is nearly $S^*$ invariant with finite defect and to calculate  the defect space explicitly. To avoid complicacies in the calculations, we restrict our self in the case $n=2$ and finally we state our result in general setting.  For this purpose let us consider $F\in KerT_2$ with $F(0)=0.$ Then the equation (2.17) can be rewritten as, $$\Theta F+\sum_{i=1}^{2}\langle F,G_i\rangle H_i =0.$$ In this context the equation \ref{f} becomes
\begin{align}
\label{h}\Theta \dfrac{F}{z} +\sum_{i=1}^{2}\langle F,G_i\rangle S^*H_i =0.
\end{align}
Next by acting $T_\Theta ^*$ on both sides of \ref{h} we have $$T_{\Theta ^*}\big(\sum_{i=1}^{2}\langle F,G_i)H_i (=\Theta ^*\big(\sum_{i=1}^{2}\langle F,G_i)\rangle S^*H_i)=-\dfrac{F}{z} \in \hdcm .$$ 
Therefore,\ref{g} becomes $$\Theta (\dfrac{F}{z}+V)+\sum_{i=1}^{2}\langle \dfrac{F}{z} +V,G_i \rangle H_i=0.$$ Using \ref{h}, the above equation is equivalent to
\begin{align}
\label{i}\Theta V -\sum_{i=1}^{2}\langle F,G_i\rangle S^*H_i+ \sum_{i=1}^{2}\langle \dfrac{F}{z}+V,G_i\rangle H_i=0 .
\end{align}
Now consider those $V\in \hdcm $ for which $$\Theta V=\sum_{i=1}^{2}\langle F,G_i \rangle S^*H_i \in \hdcm .$$ The above yields that  
\begin{align}\label{j}
V=\sum_{i=1}^{2}\langle F,G_i \rangle T_{\Theta ^*}(S^*H_i)=\sum_{i=1}^{2}\langle F,G_i \rangle S^*(T_{\Theta ^*}(H_i)),
\end{align}
because $$T_{\Theta ^*}S^*=T_{\Theta ^*}T_{\overline{\bf z}}=T_{\overline{\bf z}}T_{\Theta ^*}=S^*T_{\Theta ^*}.$$
Then the left hand side of \ref{i} becomes 
\begin{align*}
&\sum_{i=1}^{2}\langle F,G_i \rangle S^*H_i -\sum_{i=1}^{2}\langle F,G_i \rangle S^*H_i +\sum_{i=1}^{2}\langle \dfrac{F}{z}+V,G_i\rangle H_i \\
&=\sum_{i=1}^{2}\langle \Theta (\dfrac{F}{z}+V),\Theta G_i\rangle H_i ~~ (\text{since} \quad\Theta \quad \text{is inner})\\
&=\sum_{i=1}^{2}\langle \Theta \dfrac{F}{z}+\sum_{i=1}^{2}\langle F,\Theta G_i \rangle S^*H_i,G_i\rangle H_i =0.
\end{align*}
Using the above construction of $V$ in \ref{j} we define the defect space $$\fcl =\bigvee _{i=1}^2\{T_{\Theta ^*}(S^*H_i)\}=\bigvee _{i=1}^2\{S^*(T_{\Theta ^*}H_i)\},$$ with dimension at most 2. Hence $KerT_2$ is nearly $S^*$ invariant with defect atmost 2. Therefore by repeating the above calculations again we have the following theorem regarding the kernel of $T_n$.
\begin{thm}
	If $\Phi =\Theta$ is an inner multiplier, then the subspace $Ker T_n$ is  nearly $S^*$ invariant subspace with defect atmost $n$ and the defect space is $$\fcl =\bigvee _{i=1}^n\{T_{\Theta ^*}(S^*H_i)\}=\bigvee _{i=1}^n\{S^*(T_{\Theta ^*}H_i)\}.$$
\end{thm}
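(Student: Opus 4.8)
The plan is to mimic exactly the computation already carried out in the case $n=2$, which establishes the result in full except for the bookkeeping of more summands. The starting point is an arbitrary $F \in \mathrm{Ker}\, T_n$ with $F(0)=0$, so that $\Theta F + \sum_{i=1}^n \langle F, G_i\rangle H_i = 0$. Applying $T_{\bf Z}^*$ and invoking Theorem~\ref{d} (together with the identity $T_{{\bf Z}^*} T_\Phi = T_{\Phi {\bf Z}^*}$ noted after it) yields the shifted identity
\begin{align}
\label{plan1}\Theta \frac{F}{z} + \sum_{i=1}^n \langle F, G_i\rangle S^* H_i = 0,
\end{align}
which is the $n$-summand analogue of \eqref{h}. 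The goal is then to produce, for each such $F$, a vector $V$ in a finite-dimensional subspace $\fcl$ with $S^*F + V \in \mathrm{Ker}\, T_n$.

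The key idea, copied from the $n=2$ argument, is to choose $V$ so that the awkward term $\Theta V$ exactly cancels the backward-shifted perturbation. Concretely I would define
\begin{align}
\label{plan2}V = \sum_{i=1}^n \langle F, G_i\rangle\, T_{\Theta^*}(S^* H_i),
\end{align}
which is legitimate because $\Theta V = \sum_{i=1}^n \langle F, G_i\rangle S^* H_i$ lies in $\hdcm$ by \eqref{plan1}, and because $T_{\Theta^*}$ commutes with $S^*$ (using $T_{\Theta^*} S^* = T_{\Theta^*} T_{\overline{\bf z}} = T_{\overline{\bf z}} T_{\Theta^*} = S^* T_{\Theta^*}$, exactly as in the excerpt). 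With this choice, the membership condition $T_n(S^*F + V) = 0$ reduces, after substituting $\Theta V$ and using \eqref{plan1}, to
\begin{align}
\label{plan3}\sum_{i=1}^n \Big\langle \Theta\big(\tfrac{F}{z} + V\big), \Theta G_i\Big\rangle H_i = 0,
\end{align}
since $\Theta$ is inner so that $\langle X, Y\rangle = \langle \Theta X, \Theta Y\rangle$. The inner factor $\Theta$ can then be absorbed, and by \eqref{plan1} the bracketed quantity collapses to zero, exactly as the displayed three-line computation does for $n=2$. This shows $S^*F + V \in \mathrm{Ker}\, T_n$ with $V \in \fcl := \bigvee_{i=1}^n \{T_{\Theta^*}(S^*H_i)\} = \bigvee_{i=1}^n \{S^*(T_{\Theta^*} H_i)\}$.

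Finally I would observe that $\dim \fcl \le n$ since $\fcl$ is spanned by $n$ vectors, which gives the defect bound, and that the two displayed descriptions of $\fcl$ agree precisely because $T_{\Theta^*}$ and $S^*$ commute. No genuine obstacle arises beyond faithful bookkeeping: the entire structure is already present in the $n=2$ case, and the passage to general $n$ changes nothing except the range of the index in each sum. The only point requiring a word of care is confirming that the defect space may be taken orthogonal to $\mcl$ (or at least that the "defect $n$" definition is met with the stated spanning set), but this follows formally since the definition only requires $S^*F \in \mcl \oplus \fcl$, and one may replace $\fcl$ by its projection off $\mcl$ without increasing its dimension. Hence $\mathrm{Ker}\, T_n$ is nearly $S^*$-invariant with defect at most $n$ and defect space $\fcl$ as stated.
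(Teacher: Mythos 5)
Your proposal is correct and follows essentially the same route as the paper: the paper establishes the identity $\Theta\frac{F}{z}+\sum_i\langle F,G_i\rangle S^*H_i=0$ by applying $T_{\bf Z}^*$, chooses $V=\sum_i\langle F,G_i\rangle T_{\Theta^*}(S^*H_i)$ so that $\Theta V$ cancels the shifted perturbation, and uses the isometry of $T_\Theta$ to make the remaining inner-product terms vanish, exactly as you do. The paper carries this out explicitly only for $n=2$ and then asserts the general case by repetition, so your direct treatment of general $n$ is just the faithful bookkeeping the paper leaves to the reader.
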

Next we give some simple example regarding this case.
\begin{xmpl}
	If $\Phi(z)= \begin{bmatrix}  
	z^p &0 &\cdots &0\\[1pt]
	0 &z^p &\cdots &0\\
	\vdots&\vdots&\ddots&\vdots\\
	0 &0 &\cdots&z^p \\
	\end{bmatrix}_{m\times m,},p\in \mathbb{N}$ and $n=1$. Then $KerT_1$ is nearly $S^*$ invariant subspace with defect $1$ and the defect space is $\fcl =\langle (S^*)^{p+1}H_1\rangle $.
\end{xmpl}
Now coming back to the application part of Theorem \ref{a}, as discussed earlier we deal with rank one perturbation of Toeplitz operator. Thus  $$T=T_\Theta +\langle .,G\rangle H~,~with~ \norm G _2 =1 \quad \text{and} \quad S^*H\neq 0.$$ Therefore $$\mcl =Ker T\subseteq \langle \Theta ^*H \rangle ~~and~~\fcl =\langle S^*(T_{\Theta ^*}H)\rangle =\langle S^*(\Theta ^*H)\rangle. $$ Next we consider $F= \mu \Theta ^*H \in \mcl$ satisfying $T(F)=0$ which is equivalent to $$\mu (1+\langle \Theta ^*H,G\rangle)=0 .$$
Therefore we have the following two cases to consider:\\
Case 1. If $1+\langle \Theta ^*H,G\rangle \neq 0$, then $\mcl =\{0\}$ is a trivial $S^*$ invariant subspace.\\
Case 2. If $1+\langle \Theta ^*H,G\rangle = 0$, then it yields $\mcl= \langle \Theta ^*H \rangle $. Let $\{A_k\}_{k=0}^\infty$ be the coefficients of the Taylor series expansion of $\Theta ^* H$ (since $\Theta ^* H \in \hdcm $) and we calculate the subspace $\kcl$ in two subcases.
\begin{enumerate}[(i)]
	\item If $A_0 \neq 0,$ then there exists at least one function in $\mcl$ that do not vanish at $0$. Now $\wcl =\mcl \ominus (\mcl \cap z\hdcm)$ has dimension $1$ (since dim $ \mcl =1$). Therefore exactly one $F_i\neq 0$ for some $i(1\leq i\leq m)$ which generates $\wcl$. On the other hand $F_i=P_\mcl(k_0\otimes e_i)=\dfrac{\langle k_0\otimes e_i, \Theta ^*H\rangle \Theta ^*H}{\norm {\Theta ^*H}_2^2}.$ and hence 
	$F_0=
	\begin{bmatrix}
	C_{11}F_i 
	\end{bmatrix}_{m\times 1}.$ Thus\\
$\mcl = \langle \Theta ^*H\rangle =\{F:F=F_0K_0+zk_1\dfrac{S^*(\Theta ^*H)}{\norm {S^*(\Theta ^* H)}}:(K_0,k_1)\in \kcl \subset \hdcc \times \hdcc\}$\\
$=\{F:F=K_0C_{11}\dfrac{\langle k_0\otimes e_i, \Theta ^*H\rangle \Theta ^*H}{\norm {\Theta ^*H}_2^2}+\dfrac{k_1(\Theta ^*H -A_0)}{\norm{S^*(\Theta ^*H)}}:(K_0,k_1)\in \kcl \subset \hdcc  \times \hdcc\}$.\\
Note that the structure of $\mcl$ forces to conclude that $K_0\in \mathbb{C}$ and $k_1(\Theta ^*H -A_0)= \xi \Theta ^*H$ with $\xi \in \hdcc$ which will be valid if and only if $K_0 \in \mathbb{C}$ and $k_1=0$. Therefore the required nearly $S^*$ invariant subspace of finite defect is $$\mcl=\{F:F=K_0C_{11}\dfrac{\langle k_0\otimes e_i, \Theta ^*H\rangle }{\norm {\Theta ^*H}_2^2}\Theta ^*H :(K_0,0)\in \kcl\subset \hdcc \times \hdcc \}$$
and the corresponding $S^*\oplus S^*$ invariant subspace is $\kcl =\mathbb{C}\times \{0\}$ of $H^2_{\mathbb{C}^2}(\D)$.
\item If $A_0=0$, then $\wcl =\{0\}$ and hence $F_0=0$. In that case the nearly $S^*$ invariant subspace of finite defect is  $$\mcl =\langle \Theta ^*H \rangle =\{F: F=k_1\dfrac{\Theta ^*H }{\norm{S^*(\Theta ^*H)}}:k_1\in \kcl \subset \hdcc \}$$ 
with the corresponding $S^*$ invariant subspace is $\kcl =\C$ of $\hdcc$.
\end{enumerate}
\section{Kernel of finte rank perturbation of Toeplitz operator with symbol having factorization in $\mathcal{G}H^\infty(\mathcal{L}(\C ^m))$}

In this section we deal with  very special type of $\Phi \in L^\infty
(T,\mathcal{L}(\C ^m,\C^m))$ such that $\Phi= F_1^*F_2$, with $F_j \in \mathcal{G}H^\infty (\mathcal{L}(\C ^m,\C^m))$  for $j=1,2$. Here $\mathcal{G}H^\infty(\mathcal{L}(\C ^m,\C^m))$ denotes the set of all invertible elements in $H^\infty_{\mathcal{L}(\C ^m,\C^m)}(\D)$. For more details in the literature we refer to \cite{BARK} and the references cited therein.
Note that for any vector $F\in Ker T_2$ with $F(0)=0$, (2.17) can be rewritten as
\begin{align}\label{k}
T_{F_1^*}T_{F_2^*}(F) +\sum_{i=1}^{2}\langle F,G_i\rangle H_i =0. 
\end{align}
 
Since $F(0)=0$ and $T_{\overline{\bf z}}T_{F^*_1}=T_{F^*_1}T_{\overline{\bf z}}$, then by using these fact along with  the action of $T_{\bf{Z}^{^*}}$ on both sides of \ref{k} we get
\begin{align}\label{l}
T_{F_1^*}(F_2\dfrac{F}{z}) +\sum_{i=1}^{2}\langle F,G_i\rangle S^*H_i =0. 
\end{align}
Now by applying $T_{F_2^{-1}}T_{F_1^{*-1}}$  on both sides of \ref{l} we have
\begin{align}\label{m}
\dfrac{F}{z} +\sum_{i=1}^{2}\langle F,G_i\rangle T_{F_2^{-1}}T_{F_1^{*-1}}(S^*H_i) =0. 
\end{align}
Thus by using \ref{l} our desired result \ref{g} is equivalent to \
\begin{align}\label{n}
T_{F^*_1}T_{F_2}(V)-\sum_{i=1}^{2}\langle F,G_i\rangle S^*H_i +\sum_{i=1}^{2}\langle \dfrac{F}{z}+V,G_i \rangle H_i =0.
\end{align}
Next we consider $V$ such that 
\begin{align*}
& T_{F^*_1}(F_2V)=\sum_{i=1}^{2} \langle F,G_i\rangle S^*H_i \\
 \Leftrightarrow ~&~ V= \sum_{i=1}^{2} \langle F,G_i \rangle T_{F_2^{-1}} T_{F_1^{*-1}}(S^*H_i).
\end{align*}
If we consider the above $V$, then by using \ref{m} the left hand side of \ref{n} becomes
\begin{align}
\sum_{i=1}^{2}\langle F,G_i\rangle S^*H_i-\sum_{i=1}^{2}\langle F,G_i\rangle S^*H_i +\sum_{i=1}^{2}\langle \dfrac{F}{z}+\sum_{i=1}^{2} \langle F,G_i \rangle T_{F_2^{-1}} T_{F_1^{*-1}}(S^*H_i),G_i \rangle H_i =0 . 
\end{align}
Moreover from the choice of $V$ it is clear that the defect space should be $$\fcl =\bigvee _{i=1}^2 \{F_2^{-1}S^*(T_{F_1^{*-1}}H_i)\}$$ with dimension atmost 2. Therefore in general setting we have the following theorem regarding the kernel of $T_n$:
\begin{thm}
	Let $\Phi =F_1^*F_2$ with $F_1,F_2\in \mathcal{G}H^\infty(\mathcal{L}(\C ^m)).$ Then the subspace $Ker T_n$ is nearly $S^*$ invariant with defect atmost $n$ and the defect space is $$\fcl =\bigvee _{i=1}^n\{F_2^{-1}T_{F_1^{*-1}}(S^* H_i)\}=\bigvee _{i=1}^n \{F_2^{-1}S^*(T_{F_1^{*-1}}H_i)\}.$$
\end{thm}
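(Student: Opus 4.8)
The plan is to repeat, with $\sum_{i=1}^{2}$ replaced throughout by $\sum_{i=1}^{n}$, the computation \eqref{k}--\eqref{n} already carried out for the rank-two case; no feature special to $n=2$ enters, so the passage to general $n$ is purely formal. First I would take $F\in Ker\,T_n$ with $F(0)=0$ and, using $\Phi=F_1^*F_2$ together with Theorem \ref{d} (valid because $F_2\in H^\infty_{\mathcal{L}(\C^m,\C^m)}(\D)$), write the condition $T_nF=0$ as
$$T_{F_1^*}T_{F_2}(F)+\sum_{i=1}^{n}\langle F,G_i\rangle H_i=0.$$
Applying $T_{\mathbf{Z}^*}$ to this identity and invoking the commutation $T_{\mathbf{Z}^*}T_{F_1^*}=T_{F_1^*}T_{\mathbf{Z}^*}$ (a consequence of $T_{\mathbf{Z}^*}T_\Phi=T_{\Phi\mathbf{Z}^*}$ and Theorem \ref{d}) together with $F(0)=0$, I obtain the $n$-term analogue of \eqref{l}, namely $T_{F_1^*}\!\big(F_2\,\tfrac{F}{z}\big)+\sum_{i=1}^{n}\langle F,G_i\rangle S^*H_i=0$.

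Next I would exploit invertibility: since $F_1,F_2\in\mathcal{G}H^\infty(\mathcal{L}(\C^m))$, Theorem \ref{d} gives $T_{F_1^{*-1}}T_{F_1^*}=I$ (as $(F_1^{*-1})^*=F_1^{-1}\in H^\infty$) and $T_{F_2^{-1}}T_{F_2}=I$. Applying $T_{F_1^{*-1}}$ and then $T_{F_2^{-1}}$ to the previous line produces the analogue of \eqref{m},
$$\frac{F}{z}+\sum_{i=1}^{n}\langle F,G_i\rangle\,T_{F_2^{-1}}T_{F_1^{*-1}}(S^*H_i)=0,$$
which dictates the choice
$$V:=\sum_{i=1}^{n}\langle F,G_i\rangle\,T_{F_2^{-1}}T_{F_1^{*-1}}(S^*H_i),$$
the solution of $T_{F_1^*}T_{F_2}(V)=\sum_{i=1}^{n}\langle F,G_i\rangle S^*H_i$. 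Feeding this $V$ into the requirement $T_n(S^*F+V)=0$ through \eqref{g} and the $n$-term version of \eqref{n}, the two terms $\pm\sum_{i=1}^{n}\langle F,G_i\rangle S^*H_i$ cancel, and the surviving term $\sum_{i=1}^{n}\langle \tfrac{F}{z}+V,G_i\rangle H_i$ vanishes because $\tfrac{F}{z}+V=0$ by the displayed analogue of \eqref{m}. Thus $S^*F+V\in Ker\,T_n$, so $Ker\,T_n$ is nearly $S^*$-invariant with $V$ lying in $\fcl:=\bigvee_{i=1}^{n}\{T_{F_2^{-1}}T_{F_1^{*-1}}(S^*H_i)\}$, a subspace of dimension at most $n$; hence the defect is at most $n$.

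It remains to rewrite $\fcl$ in the two asserted forms. Since $F_2^{-1}\in H^\infty$, one has $T_{F_2^{-1}}g=F_2^{-1}g$; and since $F_1^{*-1}=(F_1^{-1})^*$ is co-analytic, $T_{F_1^{*-1}}$ commutes with $S^*=T_{\overline{\mathbf z}}$, so $T_{F_1^{*-1}}(S^*H_i)=S^*(T_{F_1^{*-1}}H_i)$. Together these give $\fcl=\bigvee_{i=1}^{n}\{F_2^{-1}T_{F_1^{*-1}}(S^*H_i)\}=\bigvee_{i=1}^{n}\{F_2^{-1}S^*(T_{F_1^{*-1}}H_i)\}$, as claimed. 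The calculation carries no genuine difficulty; the only points demanding care are the repeated appeals to Theorem \ref{d}, where one must track which symbol is analytic and which is co-analytic so that the product and cancellation formulas are licensed, and the verification that each intermediate vector (in particular $V$) genuinely lies in $\hdcm$ --- this is exactly where the hypothesis $F_1,F_2\in\mathcal{G}H^\infty$ is indispensable.
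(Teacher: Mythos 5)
Your proposal is correct and follows essentially the same route as the paper: the same chain of identities (reduction via Theorem \ref{d}, application of $T_{\mathbf{Z}^*}$, then inversion by $T_{F_2^{-1}}T_{F_1^{*-1}}$), the same choice of $V=\sum_{i=1}^{n}\langle F,G_i\rangle T_{F_2^{-1}}T_{F_1^{*-1}}(S^*H_i)$, and the same identification of the defect space, including the observation that $\tfrac{F}{z}+V=0$ makes the remaining sum vanish. The only difference is cosmetic: you run the argument directly for rank $n$, whereas the paper does the rank-two case and then asserts the general case by repetition.
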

\begin{rmrk}
	\begin{enumerate}
		\item If we suppose $\Phi ^* \in \mathcal{G}H^\infty(\mathcal{L}(\C ^m))$ , then the kernel of $T_n$ is nearly $S^*$ invariant with defect atmost $n$ and the defect space is $$\fcl =\bigvee _{i=1}^n \{S^*(T_{\Phi ^{-1}}H_i)\}.$$
	   \item If we consider $\Phi \in \mathcal{G}H^\infty(\mathcal{L}(\C ^m))$, then the kernel of $T_n$ is also nearly $S^*$ invariant with defect atmost $n$ and the defect space will be $$\fcl =\bigvee _{i=1}^n \{T_{\Phi ^{-1}}S^*(H_i)\} .$$
	
	\end{enumerate}
\end{rmrk}
For simplicity if we assume $n=1$ and $\Phi \in \mathcal{G}H^\infty (\mathcal{L}(\C ^m))$, then we have the following corollary.
\begin{crlre}
	For $n=1$ and $\Phi \in \mathcal{G}H^\infty (\mathcal{L}(\C ^m))$, the subspace $Ker T_1 ( i.e.,Ker T)$ is nearly $S^*$ invariant with defect atmost $1$ and the defect space is $\fcl =\langle \dfrac{S^*H}{\Phi} \rangle$.
\end{crlre}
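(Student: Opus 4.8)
The plan is to obtain this corollary as the specialization of the preceding theorem of this section to the parameters $n=1$, $F_1=I$ (the constant $m\times m$ identity multiplier), and $F_2=\Phi$. Since $\Phi\in\mathcal{G}H^\infty(\mathcal{L}(\C^m))$, the factorization $\Phi=F_1^*F_2=I^*\Phi=\Phi$ is legitimate and both factors lie in $\mathcal{G}H^\infty(\mathcal{L}(\C^m))$, so the hypotheses of the theorem are met. First I would substitute these choices into the defect-space formula $\fcl=\bigvee_{i=1}^{n}\{F_2^{-1}S^*(T_{F_1^{*-1}}H_i)\}$; with $F_1^{*-1}=I$, $F_2^{-1}=\Phi^{-1}$, and $n=1$ this collapses to $\fcl=\langle\Phi^{-1}S^*H\rangle$, which is the asserted defect space. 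Equivalently one may quote item (2) of the remark following that theorem, whose $n=1$ instance reads $\fcl=\langle T_{\Phi^{-1}}S^*H\rangle$.

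The one point needing a line of justification is the identification $\Phi^{-1}S^*H=\frac{S^*H}{\Phi}$ in the statement, i.e.\ that the operator $T_{\Phi^{-1}}$ appearing in the formula acts here as honest left multiplication by $\Phi^{-1}$. Because $\Phi\in\mathcal{G}H^\infty(\mathcal{L}(\C^m))$ forces $\Phi^{-1}\in H^\infty_{\mathcal{L}(\C^m,\C^m)}(\D)$, the product $\Phi^{-1}S^*H$ already lies in $\hdcm$, whence $T_{\Phi^{-1}}(S^*H)=P_m(\Phi^{-1}S^*H)=\Phi^{-1}S^*H$, which is exactly $\frac{S^*H}{\Phi}$.

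To make the corollary self-contained I would also record a short direct derivation. Starting from $F\in Ker\,T$ with $F(0)=0$, the $n=1$ analogue of \eqref{fi} reads $\Phi\frac{F}{z}+\langle F,G\rangle S^*H\in\overline{H}^2_0$. Both $\Phi\frac{F}{z}$ and $\langle F,G\rangle S^*H$ lie in $\hdcm$ (the first because $\Phi\in H^\infty$ and $\frac{F}{z}\in\hdcm$), and $\hdcm\cap\overline{H}^2_0=\{0\}$, so this membership upgrades to the honest identity $\Phi\frac{F}{z}+\langle F,G\rangle S^*H=0$. Multiplying on the left by $\Phi^{-1}\in H^\infty$ gives $S^*F=\frac{F}{z}=-\langle F,G\rangle\frac{S^*H}{\Phi}$, so $S^*F$ is a scalar multiple of $\frac{S^*H}{\Phi}$; hence $S^*F\in Ker\,T\oplus\fcl$ with $\fcl=\langle\frac{S^*H}{\Phi}\rangle$, which is precisely the nearly $S^*$-invariance with defect at most $1$.

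The main obstacle — really the only subtlety — is the invertibility bookkeeping: one must invoke $\Phi\in\mathcal{G}H^\infty(\mathcal{L}(\C^m))$ both to make sense of $\frac{S^*H}{\Phi}$ as a genuine element of $\hdcm$ and to legitimize passing from the projected operator identity to the unprojected one, via $T_{\Phi^{-1}}T_{\Phi}=T_{I}=I$ from Theorem \ref{d}. The phrase ``at most $1$'' accommodates the degenerate cases: if $S^*H=0$ then $\frac{S^*H}{\Phi}=0$ and $\fcl=\{0\}$, and if $\langle F,G\rangle=0$ for every $F\in Ker\,T$ vanishing at the origin then $\frac{F}{z}=0$ forces $F=0$, so no defect is required.
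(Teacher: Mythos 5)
Your proposal is correct and follows essentially the same route as the paper: the corollary is obtained there exactly as you do it, by specializing the theorem of Section 4 (equivalently, item (2) of the remark following it, which is the case $F_1=I$, $F_2=\Phi$) to $n=1$, with the identification $T_{\Phi^{-1}}S^*H=\Phi^{-1}S^*H=\frac{S^*H}{\Phi}$ justified by $\Phi^{-1}\in H^\infty_{\mathcal{L}(\C^m,\C^m)}(\D)$. Your supplementary direct derivation (upgrading the membership in $\overline{H}^2_0$ to an exact identity via $\hdcm\cap\overline{H}^2_0=\{0\}$ and left-multiplying by $\Phi^{-1}$) is a sound bonus consistent with the paper's argument for the general theorem.
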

We now discuss about the application part of Theorem(2.2). Since in this section we consider that $\Phi =F_1^*F_2$, where $F_1,F_2\in \in \mathcal{G}H^\infty (\mathcal{L}(\C ^m))$ almost everywhere on $\mathbb{T}$, then $$\mcl =Ker T\subseteq \langle F_2^{-1}(T_{F_1^{*-1}}H)\rangle$$ and the defect space is $$\fcl =\langle F_2^{-1}T_{F_1^{*-1}}(S^*H) \rangle =\langle F_2^{-1}S^*(T_{F_1^{*-1}}H) \rangle .$$
Let us consider any vector $F=\lambda F_2^{-1}(T_{F_1^{*-1}}H)\in \mcl$ satisfying $T(F)=0$. Then it is equivalent to the following
\begin{align}\label{o}
\lambda (1+\langle F_2^{-1}(T_{F_1^{*-1}}H),G\rangle )=0.
\end{align} 
Therefore according to \ref{o} we have the following two cases.\\ 
Case 1: If $1+\langle F_2^{-1}(T_{F_1^{*-1}}H),G\rangle \neq 0$, then it yields that $\mcl =\{0\}$ which is a trivial $S^*$ invariant subspace.\\
Case 2: If $1+\langle F_2^{-1}(T_{F_1^{*-1}}H),G\rangle = 0$, then $\mcl= \langle F_2^{-1}(T_{F_1^{*-1}}H)\rangle$. \\
Let $\{A_k\}_{k=0}^\infty $ be the Taylor coefficients of $T_{F_1^{*-1}}H$ and let $\{\Phi _k\}_{k=0}^\infty$ be the Taylor coefficients of $F_2^{-1}$.Therefore it follows that
\begin{align*}
[F_2^{-1}(T_{F_1^{*-1}}H)](z)=\sum_{n=0}^{\infty}(\sum_{k=0}^{n}\Phi _kA_{n-k})z^n, \\
zS^*(T_{F_1^{*-1}}H)(z)=(T_{F_1^{*-1}}H)(z)-A_0.
\end{align*}
Depending upon the context we again divide our analysis in two subcases to calculate $\kcl$.
\begin{enumerate}
	\item If $F_2 ^{-1}(T_{F_1^{*-1}}H)(0)\neq 0$, then $\wcl$ has dimension exactly $1$ and therefore only one $F_i$ generates $\wcl$. Thus $$F_i=P_{\mcl}(k_0 \otimes e_i)= \dfrac{\langle k_0\otimes e_i,F_2^{-1}(T_{F_1^{*-1}}H)\rangle F_2^{-1}(T_{F_1^{*-1}}H)}{\norm {F_2^{-1}(T_{F_1^{*-1}}H}^2}$$ and hence $$F_0=[C_{11}F_2^{-1}(T_{F_1^{*-1}}H)]_{m\times 1}\quad \text{and} \quad E(z)=\dfrac{F_2^{-1}S^*(T_{F_1^{*-1}}H)}{\norm {F_2^{-1}S^*(T_{F_1^{*-1}}H)}^2}.$$ Therefore from the first case of Theorem\ref{a} it follows that $$\mcl =\langle F_2^{-1}(T_{F_1^{*-1}}H)\rangle =\{F:F=F_0K_0+zk_1E_1 :(K_0,k_1)\in\kcl\subset \hdcc\times\hdcc \}.$$ Following the above identities we have $$K_0 \in \C \quad \text{and} \quad k_1\dfrac{F_2^{-1}(T_{F_1^{*-1}}H -A_0) }{\norm {F_2^{-1}S^*(T_{F_1^{*-1}}H)}^2}=\xi F_2^{-1}(T_{F_1^{*-1}}H) \quad \text{with} \quad \xi \in \C ,$$ which will be true if and only if $K_0 \in \C$ and $k_1=0.$ Thus the subspace $\mcl$ has the following representation $$\mcl =\{F:F=C_{11}K_0 \dfrac{F_2^{-1}(T_{F_1^{*-1}}H)}{\norm {F_2^{-1}(T_{F_1^{*-1}}H)}^2}:(K_0,0)\in \kcl \}$$
with $\kcl= \C \times \{0\}$ is a $S^*\oplus S^*$ invariant subspace of $H^2(\D,\C ^2)$.
\item If $F_2^{-1}(T_{F_1^{*-1}}H)(0)=\Phi_0A_0=0$, then $\wcl =\{0\}$ and hence from the case(ii) of Theorem \ref{a} we have
$$\mcl =\langle F_2^{-1}(T_{F_1^{*-1}}H) \rangle =\{F:F=k_1\dfrac{F_2^{-1}(T_{F_1^{*-1}}H-A_0)}{\norm {F_2^{-1}S^*(T_{F_1^{*-1}}H)}}:k_1\in\kcl \subset \hdcc \}$$
which will be true if and only if $A_0 =0$ and $k_1 \in \C$. Thus we have the subspace $$\mcl =\{F:F=k_1\dfrac{F_2^{-1}(T_{F_1^{*-1}}H)}{\norm{F_2^{-1}S^*(T_{F_1^{*-1}}H)}}:k_1\in \kcl  \}$$ with $\kcl =\C$ an $S^*$ invariant subspace of $\hdcc$.

\end{enumerate}

\section{Kernel of finte rank perturbation of Toeplitz operator having symbol adjoint of an inner multiplier}
In this section we consider $\Phi(z) =\Theta ^*(z) $, where $\Theta$ is a non constant inner multiplier. It is well known that the kernel of the Toeplitz operator $T_{\Theta ^*}$ is the model space $\mathcal{K}_\Theta$. In other words $Ker T_{\Theta ^*}=\mathcal{K}_\Theta =\hdcm \ominus \Theta \hdcm$. 
Like in the previous section first we find an equivalent condition \ref{fi} for any vector $F\in ker T_2$ with $F(0)=0$. Note that
\begin{align*}
\Theta ^*\dfrac{F}{z}+ \sum_{i=1}^{2}\langle F,G_i \rangle S^*H_i \in \overline{H}_0^2
\end{align*}
is equivalent to 
\begin{align}\label{p}
\dfrac{F}{z}+ \sum_{i=1}^{2}\langle F,G_i \rangle \Theta S^*H_i \in \Theta \overline{H}_0^2.
\end{align}
Therefore in this case the equations\ref{gi} and \ref{hi} changed into
\begin{align}
\label{q}~&~P_m\left(\Theta ^* V-\sum_{i=1}^{2}\langle F,G_i\rangle S^*H_i+\sum_{i=1}^{2}\langle \dfrac{F}{z}+V,G_i\rangle H_i \right)=0.\\
\nonumber \Leftrightarrow ~&~\Theta ^* V-\sum_{i=1}^{2}\langle F,G_i\rangle S^*H_i+\sum_{i=1}^{2}\langle \dfrac{F}{z}+V,G_i\rangle H_i  \in \overline{H}^2_0.
\end{align}
Next we consider three cases for the construction of the defect space $\fcl$ in this case.\\

     Case 1: Suppose $G_1\in \Theta \hdcm$ and  $G_2 \in \Theta \hdcm$.\\
Due to condition \ref{p}, it follows that 
\begin{align}
\label{r}\left\langle \dfrac{F}{z}+ \sum_{i=1}^{2}\langle F,G_i \rangle \Theta S^*H_i ,G_1 \right\rangle =0.
\end{align}
and
\begin{align}
\label{s}\left\langle \dfrac{F}{z}+ \sum_{i=1}^{2}\langle F,G_i \rangle \Theta S^*H_i ,G_2 \right\rangle =0.
\end{align}  
Now we consider $V=\sum\limits_{j=1}^{2}\langle F,G_j \rangle \Theta S^*H_j$. Then substituting this $V$ and using the above two identities \ref{r} and \ref{s}, the left hand side of \ref{q} becomes
$$P_m\left (\Theta ^*\left(\sum_{i=1}^{2}\langle F,G_i \rangle  \Theta S^*H_i\right) -\sum_{i=1}^{2}\langle F,G_i \rangle  S^*H_i +\sum_{i=1}^{2}\left\langle \dfrac{F}{z}+\sum_{j=1}^{2}\langle F,G_j \rangle \Theta S^*H_j,G_i\right\rangle H_i \right )=0. $$
Thus we define the defect space as 
\begin{align}\label{rs}
\fcl =\bigvee _{i=1}^2\left\{\Theta S^*H_i \right\}
\end{align}
 and hence $KerT_2$ is nearly $S^*$ invariant with defect atmost $2$.\\

  Case 2: In this case we consider  $G_1\in \Theta \hdcm$ and  $G_2 \notin \Theta \hdcm$ (On a similar note, one can assume that $G_1\notin \Theta \hdcm$ and $G_2 \in \Theta \hdcm$, then the corresponding analysis will be of similar kind). First we note that \ref{r} holds in this case. Now if the identity \ref{s} also hold in this case, then the defect space will be exactly same as in Case 1. Therefore we assume that the identity \ref{s} does not hold in this case, then the vector $G_2$ can be decomposed in the following way
  $$G_2=U+W \quad \text{with} \quad U\neq 0\in \mathcal{K}_\Theta =Ker T_{\Theta ^*}\quad \text{and} \quad W\in \Theta \hdcm . $$
Thus 
\begin{align}\label{t}
T_{\Theta ^*}(U)=P_m(\Theta ^* U)=0 .
\end{align}

Now we choose $V=\sum\limits_{j=1}^{2}\langle F,G_j\rangle \Theta S^*H_j +\xi _F U, $ where $\xi _F$ is a constant which satisfies the following identity
\begin{align}\label{u}
\xi _F\norm {U}^2=-\left\langle \dfrac{F}{z}+\sum_{j=1}^{2}\langle F,G_j\rangle \Theta S^*H_j,G_2 \right\rangle \neq 0. 
\end{align} 
If we substitute $V$ in the left hand side of \ref{q}, then we have 
\begin{align*}
&P_m\Bigg(\Theta ^* (\sum_{j=1}^{2}\langle F,G_j\rangle \Theta S^*H_j +\xi _F U)-\sum_{i=1}^{2}\langle F,G_i\rangle S^*H_i\\
&\hspace*{2.5in}+\sum_{i=1}^{2}\left\langle \dfrac{F}{z}+(\sum_{j=1}^{2}\langle F,G_j\rangle \Theta S^*H_j +\xi _F U),G_i\right\rangle H_i \Bigg)\\
=&P_m \left( \xi _F\Theta ^*U+\sum_{i=1}^{2}\left\langle \dfrac{F}{z}+(\sum_{j=1}^{2}\langle F,G_j\rangle \Theta S^*H_j +\xi _F U),G_i\right \rangle H_i \right) \\
=&\sum_{i=1}^{2}\left\langle \dfrac{F}{z}+(\sum_{j=1}^{2}\langle F,G_j\rangle \Theta S^*H_j +\xi _F U),G_i\right \rangle H_i~~~\text{(using~\ref{t})}\\
=&\left\langle \dfrac{F}{z}+(\sum_{j=1}^{2}\langle F,G_j\rangle \Theta S^*H_j +\xi _F U),G_2\right \rangle H_2~~~~\text{(using~\ref{r}~and~}\langle U,G_1\rangle =0)\\
=&\left(\left\langle \dfrac{F}{z}+\sum_{j=1}^{2}\langle F,G_j\rangle \Theta S^*H_j ,G_2\right \rangle +\xi_F \norm {U}^2 \right ) H_2 =0. ~~~~~~\text{(using ~\ref{u})}
\end{align*}

Therefore by using the construction of $V$ we define the defect space as
\begin{align}\label{v}
\fcl =\bigvee\{\Theta S^*H_1,\Theta S^*H_2,P_{K_\Theta}G_2\},
\end{align}
where $P_{\mathcal{K}_\Theta}:L^2 \to \mathcal{K}_\Theta$ is an orthogonal projection onto $\mathcal{K}_\Theta$. Therefore $Ker T_2$ is  nearly $S^*$-invariant with defect atmost $3$. The defect space is either given by \ref{v} or given by \ref{rs} depending upon the condition \ref{s}.

Case 3: In this case we consider that none of $G_1,G_2$ is in $\Theta \hdcm$ , that is $G_1 \notin \Theta \hdcm$ and $G_2\notin \Theta \hdcm$.
If both \ref{r} and \ref{s} hold, then the defect space will be same as in Case 1, and if either  \ref{r} or \ref{s} hold, then the defect space is same as in Case 2. 
Therefore we assume that
\begin{align}
\left\langle \dfrac{F}{z}+ \sum_{i=1}^{2}\langle F,G_i \rangle \Theta S^*H_i ,G_1 \right\rangle \neq 0,\\
\left\langle \dfrac{F}{z}+ \sum_{i=1}^{2}\langle F,G_i \rangle \Theta S^*H_i ,G_2 \right\rangle \neq 0.
\end{align}  
In that case we decompose the vectors $G_1,G_2$ in the following way $$G_1=X_1+Y_1\quad \text{and} \quad G_2=X_2+Y_2,$$ where $X_i(\neq 0)\in \mathcal{K}_\Theta =KerT_{\Theta ^*}$ and $Y_i\in \Theta \hdcm$ for $i=1,2$. Therefore 
\begin{align}\label{w}
T_{\Theta ^*}(X_j)=P_m(\Theta ^*X_j)=0~~for~~j=1,2.
\end{align}
Now we choose rank -2 operator in such a way so that
\begin{align}\label{x}
\langle G_1,G_2\rangle =0 \quad \text{with}\quad \langle P_{K_\Theta}G_1,P_{K_\Theta}G_2 \rangle =\langle X_1,X_2\rangle =0.
\end{align}
Recall that our desired result is \ref{q} and for that purpose we choose $V$ such that $$V=\sum_{k=1}^{2}\langle F,G_k\rangle \Theta S^*H_k+\xi _{F,1}X_1 + \xi _{F,2}X_2,$$ 
where
\begin{align}\label{y}
 \xi _{F,j}\norm{X_j}^2=-\left\langle \dfrac{F}{z}+\sum_{k=1}^{2}\langle F,G_k\rangle \Theta S^*H_k, G_j \right \rangle (\neq 0),~j=1,2.
\end{align}

Substituting the above $V$ in the left hand side of \ref{q} we get
\begin{align*}
&P_m\left(\Theta ^* (\sum_{k=1}^{2}\langle F,G_k\rangle \Theta S^*H_k+\xi _{F,1}X_1 + \xi _{F,2}X_2)-\sum_{i=1}^{2}\langle F,G_i\rangle S^*H_i+\sum_{i=1}^{2}\langle \dfrac{F}{z}+V,G_i\rangle H_i \right)\\
=&P_m\left( \xi _{F,1}\Theta ^*X_1 + \xi _{F,2}\Theta ^*X_2+ \sum_{i=1}^{2}\langle \dfrac{F}{z}+V,G_i\rangle H_i \right)\\
=&\sum_{i=1}^{2}\langle \dfrac{F}{z}+V,G_i\rangle H_i~\text{(using ~\ref{w})} \\
=&\sum_{i=1}^{2}\langle \dfrac{F}{z}+\sum_{k=1}^{2}\langle F,G_k\rangle \Theta S^*H_k+\xi _{F,1}X_1 + \xi _{F,2}X_2,G_i\rangle H_i \\
=&\left\langle \dfrac{F}{z}+\sum_{k=1}^{2}\langle F,G_k\rangle \Theta S^*H_k+\xi _{F,1}X_1 + \xi _{F,2}X_2,G_1\right\rangle H_1 \\
&\hspace*{.5in}+\left\langle \dfrac{F}{z}+\sum_{k=1}^{2}\langle F,G_k\rangle \Theta S^*H_k+\xi _{F,1}X_1 + \xi _{F,2}X_2,G_2\right\rangle H_2\\
=&\left\langle \dfrac{F}{z}+\sum_{k=1}^{2}\langle F,G_k\rangle \Theta S^*H_k ,G_1 \right\rangle H_1+\langle \xi _{F,1}X_1+ \xi _{F,2}X_2,X_1+Y_1\rangle H_1 \\ 
&\hspace*{.5in}+\left\langle \dfrac{F}{z}+\sum_{k=1}^{2}\langle F,G_k\rangle \Theta S^*H_k ,G_2\right\rangle H_2 +\langle \xi _{F,1}X_1 + \xi _{F,2}X_2,X_2+Y_2 \rangle H_2 \\
=&\left(\left\langle \dfrac{F}{z}+\sum_{k=1}^{2}\langle F,G_k\rangle \Theta S^*H_k ,G_1 \right\rangle+ \xi _{F,1}\norm {X_1}^2+ \xi _{F,2}\langle X_2,X_1\rangle \right )H_1 \\ 
&\hspace*{.5in}+\left(\left\langle \dfrac{F}{z}+\sum_{k=1}^{2}\langle F,G_k\rangle \Theta S^*H_k ,G_2\right\rangle  + \xi _{F,2}\langle X_1,X_2 \rangle + \xi _{F,2}\norm{X_2}^2 \right)H_2 \\
=&0. ~~~~\text{(Using \ref{x} and \ref{y})}\\
\end{align*}
Thus our chosen vector $V$ fulfill all the requirements and from the construction of $V$ we  define the defect space as follows
$$ \fcl =\bigvee \{\Theta S^*H_1,\Theta S^*H_2, P_{K_\Theta}G_1,P_{K_\Theta}G_2 \} $$ with dimension atmost $4$. Consequently the kernel of $T_2$ is nearly $S^*$-invariant with defect atmost $4$.Therefore by repeating the above argument once again we have the following theorem in general context regarding the kernel of $T_n$.
\begin{thm}
	Suppose $\Phi (z)=\Theta ^*(z),$ where $\Theta \in H^\infty(\D,\mathcal{L}(\C ^m)) $ is an inner multiplier, then the following statements hold.
	\begin{enumerate}[(i)]
		\item If $G_j\in \Theta \hdcm \forall j\in\{1,2,\ldots ,n\}$, then
		 the subspace $Ker T_n$ is nearly $S^*$ invariant with defect atmost $n$ and the defect space is $$\fcl =\bigvee \{\Theta S^*H_j: j=1,2,\ldots n \}.$$
	     \item If $G_j\notin \Theta \hdcm ~for~j\in\{1,2,\ldots ,l\}\subset \{1,2,\ldots ,n\}$, then the kernel of $T_n$ is nearly $S^*$ invariant subspace of $\hdcm$ with defect atmost $n+l$ and the defect space is $$\fcl =\bigvee \{\Theta S^*H_i,P_{K_\Theta}G_j :i=1,2,\ldots ,n~and~j=1,2,\ldots l \}.$$

	\end{enumerate}
\end{thm}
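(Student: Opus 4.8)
The plan is to obtain both parts by extending, essentially verbatim, the three-case computation already carried out for $T_2$ at the start of this section, now with every sum ranging over $i=1,\ldots,n$. As in the reduction producing \ref{q}, the whole problem collapses to exhibiting, for each $F\in Ker\,T_n$ with $F(0)=0$, a vector $V$ lying in a fixed finite-dimensional subspace $\fcl$ (independent of $F$) for which
\begin{align*}
P_m\left(\Theta^*V-\sum_{i=1}^{n}\langle F,G_i\rangle S^*H_i+\sum_{i=1}^{n}\left\langle \tfrac{F}{z}+V,G_i\right\rangle H_i\right)=0 .
\end{align*}
The only structural inputs used to verify this are that $\Theta$ is inner, so that $\Theta^*\Theta=I$ almost everywhere on $\mathbb{T}$ (whence $T_{\Theta^*}T_\Theta=I$ and $\Theta\hdcm\perp\Theta\overline{H}^2_0$), together with the membership relation \ref{p}, namely $\tfrac{F}{z}+\sum_{i=1}^{n}\langle F,G_i\rangle\Theta S^*H_i\in\Theta\overline{H}^2_0$. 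Both inputs are insensitive to the number of rank-one terms, so the two cases follow by the same algebra.

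For part (i), assume $G_j\in\Theta\hdcm$ for every $j$. Since $\Theta\hdcm\perp\Theta\overline{H}^2_0$, relation \ref{p} forces $\langle \tfrac{F}{z}+\sum_{i=1}^{n}\langle F,G_i\rangle\Theta S^*H_i,\,G_j\rangle=0$ for each $j$. I would then take $V=\sum_{j=1}^{n}\langle F,G_j\rangle\Theta S^*H_j$. Applying $P_m(\Theta^*\,\cdot\,)$ turns $\Theta^*V$ into $\sum_{j}\langle F,G_j\rangle S^*H_j$, which cancels the middle sum in the displayed identity, while the orthogonality just noted annihilates every surviving $H_i$-term. Hence the identity holds with $\fcl=\bigvee_{j=1}^{n}\{\Theta S^*H_j\}$, giving defect at most $n$, exactly as in \ref{rs}.

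For part (ii), relabel so that $G_1,\ldots,G_l\notin\Theta\hdcm$ while $G_{l+1},\ldots,G_n\in\Theta\hdcm$, and decompose each offending symbol as $G_j=X_j+Y_j$ with $X_j=P_{K_\Theta}G_j\neq0\in\mathcal{K}_\Theta=Ker\,T_{\Theta^*}$ and $Y_j\in\Theta\hdcm$. Mirroring \ref{x}, I would choose the rank-$n$ perturbation so that the projected parts $X_1,\ldots,X_l$ are mutually orthogonal. The candidate is $V=\sum_{k=1}^{n}\langle F,G_k\rangle\Theta S^*H_k+\sum_{j=1}^{l}\xi_{F,j}X_j$, where the scalars $\xi_{F,j}$ are forced by the analogue of \ref{y}, i.e.\ $\xi_{F,j}\norm{X_j}^2=-\langle \tfrac{F}{z}+\sum_{k}\langle F,G_k\rangle\Theta S^*H_k,\,G_j\rangle$. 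In the displayed identity the correction terms contribute $P_m(\Theta^*X_j)=T_{\Theta^*}X_j=0$ by \ref{w}, so $\Theta^*V$ still cancels the middle sum; the $H_i$-terms with $i>l$ vanish because $G_i\in\Theta\hdcm$ is orthogonal both to the $\Theta\overline{H}^2_0$-part supplied by \ref{p} and to every $X_j\in\mathcal{K}_\Theta$; and for $i\le l$ the pairing $\langle\sum_j\xi_{F,j}X_j,G_i\rangle$ collapses to $\xi_{F,i}\norm{X_i}^2$ by the mutual orthogonality of the $X_j$ and by $X_j\perp Y_i$, which is precisely what the defining equation for $\xi_{F,i}$ cancels. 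This yields $\fcl=\bigvee\{\Theta S^*H_i,\,P_{K_\Theta}G_j:\,i=1,\ldots,n,\ j=1,\ldots,l\}$ and defect at most $n+l$.

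The step I expect to be the genuine obstacle is the bookkeeping in part (ii): one must ensure that the mutual-orthogonality normalization of the $X_j=P_{K_\Theta}G_j$ is compatible with the orthonormality already imposed on the $G_i$, and that every cross term $\langle X_j,X_i\rangle$ with $i\ne j$, as well as every pairing of an $X_j$ against a $\Theta\hdcm$-component $Y_i$ or $G_i$ ($i>l$), genuinely drops out, so that the linear system determining the scalars $\xi_{F,j}$ decouples and is uniquely solvable for each fixed $F$. Once this decoupling is secured, the defect-dimension count and the verification of \ref{q} are immediate repetitions of the $n=2$ computation.
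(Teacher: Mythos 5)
Your proposal is correct and is essentially the paper's own argument: the paper carries out exactly this three-case computation for $n=2$ (same choice of $V$, same scalars $\xi_{F,j}$ as in \ref{u} and \ref{y}, same orthogonality normalization \ref{x} on the projections onto $\mathcal{K}_\Theta$) and then obtains the general statement ``by repeating the above argument,'' which is precisely what you spell out. The bookkeeping concern you flag at the end is resolved in the paper the same way you propose, namely by choosing the rank-$n$ perturbation so that the vectors $P_{K_\Theta}G_j$ are mutually orthogonal.
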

 Like other sections we  now discuss the application part of Theorem \ref{a} in this context.
For the operator $T$, the equation \ref{e} is equivalent to 
\begin{align*}
& T_{\Theta ^*}F+\langle F,G\rangle H =0\\
\Leftrightarrow &~\Theta ^*F+\langle F,G\rangle H\in \overline{ H _0^2} \\
\Leftrightarrow &~F+\langle F,G\rangle \Theta H\in \Theta \overline{ H _0^2}.
\end{align*}
Observing the above equivalent criteria we  say that the kernel of the operator $T$ satisfies 
$$\mcl =Ker T\subset (H^2 \cap \Theta \overline{ H _0^2})\oplus \langle \Theta H\rangle =K_\Theta \oplus \langle \Theta H\rangle .$$
Now consider the vector $F\in \mcl =KerT$ which is of the form $F= F_\zeta +\mu \Theta H$, where $F_\zeta \in K_\Theta$ and $\mu \in \C .$ Then the above equivalent condition reduces to 
\begin{equation}\label{z}
\mu (1+\langle \Theta H,G )=-\langle F_\zeta ,G\rangle .
\end{equation}
 
Therefore from the first part of this section we conclude the defect space of the nearly $S^*$ invariant subspace $KerT$ and which is as follows:
\begin{equation*}
\fcl =
\begin{cases*}
\langle \Theta S^*H \rangle, & \text { for $G \in \Theta \hdcm ,$}\\
\bigvee \{\Theta S^*H,P_{K_\Theta}G \}, & \text{ for $G\notin \Theta \hdcm .$}
\end{cases*}
\end{equation*}
Accordingly we  analyse the kernel of $T$ in two different subsections due to the above two different representation of defect spaces.
Before we proceed we would like to mention the following: We know that the subspace $\wcl =\mcl \ominus (\mcl \cap z\hdcm )$ (if non trivial) is generated by the vectors $\{F_1,F_2,\ldots ,F_m\}$, where $F_i=P_\mcl (k_0\otimes e_i)$. If $dim \wcl =r(1\leq r\leq m)$ and $\{W_1,W_2,\ldots ,W_r \}$ is an orthonormal basis for $\wcl$, then $F_0$ is the $m\times r$ matrix whose columns are $W_1,W_2,\ldots ,W_r$. Using Gram-Schmidt orthonormalization we can form an orthonormal basis for $\wcl$ from the generator set $\{F_1,F_2,\ldots ,F_m \}$ and the resulting vectors are nothing but the linear combination of $\{F_1,F_2,\ldots ,F_m\}$. Therefore it is sufficient to prove that everything holds good for the generating set of vectors. For simplicity of calculations we do the whole analysis for single $F_i$, that means from now onwards we assume  that $dim \wcl =1$ and $F_0=[\alpha _iF_i]_{m\times 1}$ in each cases and subsections.
\subsection{\boldmath ${G \in \Theta \hdcm} $}
 $~$\\In this subsection we assume that $G\in \Theta \hdcm $. Then the corresponding defect space for the nearly $S^*$ invarint subspace $\mcl = KerT$ is $\fcl = \langle \Theta S^*H \rangle $.
 Therefore the equation \ref{z} reduces to $$\mu(1+\langle \Theta H,G\rangle)=0.$$
 Now considering the defect space as $\fcl =\langle \Theta S^* H\rangle $ we have  the following two cases.
 
 \textbf{Case 1.} If $1+\langle \Theta H,G\rangle =0$, then $\mcl =K_\Theta + \langle \Theta H \rangle . $ Moreover,
 \begin{align*}
  F_i & = P_M(k_0\otimes e_i) \\
 &=P_{K_\Theta}(k_0\otimes e_i)+P_{\langle \Theta H\rangle}(k_0\otimes e_i)\\
 &=k_0\otimes e_i -\sum_{j=1}^{m}\langle k_0\otimes e_i,\Theta e_j\rangle \Theta e_j +\dfrac{\langle k_0\otimes e_i,\Theta H\rangle \Theta H}{\norm {\Theta H}^2}\\
 &=k_0\otimes e_i -\Theta \begin{pmatrix}
 \overline{\theta_{i1}(0)}\\
 \overline{\theta_{i2}(0)}\\
 \vdots \\
 \overline{\theta_{im}(0)}\\
 \end{pmatrix} +\dfrac{\langle k_0\otimes e_i,\Theta H\rangle \Theta H}{\norm {\Theta H}^2}.
\end{align*}
 Therefore the case (i) of Theorem \ref{a} implies that the nearly invariant subspace $\mcl$ for $S^*$ with finite defect can be written in the following form.
 \begin{align*}
 &\mcl =K_\Theta +\langle \Theta H \rangle \\
 &=\Big\{F:F =F_0K_0+zk_1\dfrac{\Theta S^*H}{\norm{S^*H}} :(K_0,k_1)\in \kcl \Big \}\\
 &=\Bigg\{F:F=\alpha _i(k_0\otimes e_i -\Theta \begin{pmatrix}
 \overline{\theta_{i1}(0)}\\
 \overline{\theta_{i2}(0)}\\
 \vdots \\
 \overline{\theta_{im}(0)}\\
 \end{pmatrix} +\dfrac{\langle k_0\otimes e_i,\Theta H\rangle \Theta H}{\norm {\Theta H}^2})K_0 + k_1 \dfrac{\Theta (H-H(0))}{\norm{S^*H}} :(K_0,k_1)\in \kcl \Bigg\}\\
 &=\Bigg\{F:F=\alpha _i\Big(k_0\otimes e_i -\Theta \begin{pmatrix}
 \overline{\theta_{i1}(0)}\\
 \overline{\theta_{i2}(0)}\\
 \vdots \\
 \overline{\theta_{im}(0)}\\
 \end{pmatrix}\Big)~K_0+k_1\dfrac{\Theta H(0)}{\norm {S^*H}} \\
 &\hspace*{2in}+\Big(\alpha _i\dfrac{\langle k_0\otimes e_i,\Theta H\rangle }{\norm {\Theta H}^2}K_0 + \dfrac{k_1}{\norm{S^*H}}\Big)\Theta H :(K_0,k_1)\in \kcl \Bigg\},
 \end{align*}
 where $\kcl =\{(K_0,k_1)\in H^2_{\C^2}(\D):K_0$ and $ k_1$ satisfies the following \ref{A} \}
 \begin{equation}\label{A}
 \begin{cases}
 \alpha _i\Big(k_0\otimes e_i -\Theta \begin{pmatrix}
 \overline{\theta_{i1}(0)}\\
 \overline{\theta_{i2}(0)}\\
 \vdots \\
 \overline{\theta_{im}(0)}\\
 \end{pmatrix}\Big)~K_0+k_1\dfrac{\Theta H(0)}{\norm {S^*H}} \in K_\Theta \\
 \alpha _i\dfrac{\langle k_0\otimes e_i,\Theta H\rangle }{\norm {\Theta H}^2}K_0 + \dfrac{k_1}{\norm{S^*H}}\in \C.
 \end{cases}
 \end{equation}
 Our next aim is to show the subspace $\kcl$ of $H^2_{\C^2}(\D)$ is a $S^*\oplus S^*$ invariant subspace. First we observe that if we replace $K_0,k_1$ by $S^*K_0,S^*k_1$, then the second condition of \ref{A} is trivially true. Thus we only have to check that the first condition also true if we replace $K_0,k_1$ by $S^*K_0,S^*k_1$. Since $\mathcal{K}_\Theta$ is an $S^*$ invariant subspace of $\hdcm$, then 
 $$H_\Theta : ~ =~ S^*\left(\alpha _i\Big(k_0\otimes e_i -\Theta \begin{pmatrix}
 \overline{\theta_{i1}(0)}\\
 \overline{\theta_{i2}(0)}\\
 \vdots \\
 \overline{\theta_{im}(0)}\\
 \end{pmatrix}\Big)~K_0+k_1\dfrac{\Theta H(0)}{\norm {S^*H}}  \right)\in \mathcal{K}_\Theta.$$
 On the other hand 
 \begin{align*}
~&~\alpha _i\Big(k_0\otimes e_i -\Theta \begin{pmatrix}
\overline{\theta_{i1}(0)}\\
\overline{\theta_{i2}(0)}\\
\vdots \\
\overline{\theta_{im}(0)}\\
\end{pmatrix}\Big)~S^*K_0+ S^*k_1\dfrac{\Theta H(0)}{\norm {S^*H}}\\
&=~ H_\Theta +S^*\left(\alpha _i\Theta \begin{pmatrix}
\overline{\theta_{i1}(0)}\\
\overline{\theta_{i2}(0)}\\
\vdots \\
\overline{\theta_{im}(0)}\\
\end{pmatrix} K_0 -k_1\dfrac{\Theta H(0)}{\norm {S^*H}}\right)-\alpha _i\Theta \begin{pmatrix}
\overline{\theta_{i1}(0)}\\
\overline{\theta_{i2}(0)}\\
\vdots \\
\overline{\theta_{im}(0)}\\
\end{pmatrix}S^*K_0+ S^*k_1\dfrac{\Theta H(0)}{\norm {S^*H}}\\
&=~H_\Theta +\alpha_i \dfrac{\Theta(z)-\Theta(0)}{z}\begin{pmatrix}
\overline{\theta_{i1}(0)}\\
\overline{\theta_{i2}(0)}\\
\vdots \\
\overline{\theta_{im}(0)}\\
\end{pmatrix}K_0(0) -\dfrac{k_1(0)}{\norm{S^*H}}\dfrac{\Theta(z)-\Theta(0)}{z}H(0)\\
&=~H_\Theta +\alpha_i S^*\Theta \begin{pmatrix}
\overline{\theta_{i1}(0)}\\
\overline{\theta_{i2}(0)}\\
\vdots \\
\overline{\theta_{im}(0)}\\
\end{pmatrix}K_0(0)-\dfrac{k_1(0)}{\norm{S^*H}}S^*\Theta (H(0))\in \mathcal{K}_\Theta.
\end{align*}
 Since $H_\Theta \in \mathcal{K}_\Theta$, $S^*\Theta \begin{pmatrix}
 \overline{\theta_{i1}(0)}\\
 \overline{\theta_{i2}(0)}\\
 \vdots \\
 \overline{\theta_{im}(0)}\\
 \end{pmatrix}\in \mathcal{K}_\Theta \quad \text{and} \quad S^*\Theta (H(0))\in \mathcal{K}_\Theta$, then from the above equation we conclude that $\kcl $ is an $S^*\oplus S^*$ invariant subspace of $H^2_{\C ^2}(\D)$.\\ 
 \textbf{Case 2.} If $1+\langle \Theta H,G\rangle \neq0$, then $\mcl =\mathcal{K}_\Theta .$ Therefore\\
 $$F_i=P_\mcl (k_0\otimes e_i)=P_{\mathcal{K}_\Theta }(k_0\otimes e_i)=k_0\otimes e_i -\sum_{j=1}^{m}\langle k_0\otimes e_i,\Theta e_j\rangle \Theta e_j =k_0\otimes e_i -\Theta \begin{pmatrix}
 \overline{\theta_{i1}(0)}\\
 \overline{\theta_{i2}(0)}\\
 \vdots \\
 \overline{\theta_{im}(0)}\\
 \end{pmatrix}  .$$
 Using the case(i) of Theorem \ref{a} we have  
\begin{align*}
&\mcl =\mathcal{K}_\Theta  \\
&=\Big\{F:F =F_0K_0+zk_1\dfrac{\Theta S^*H}{\norm{S^*H}} :(K_0,k_1)\in \kcl \Big \}\\
&=\Bigg\{F:F=\alpha _i(k_0\otimes e_i -\Theta \begin{pmatrix}
\overline{\theta_{i1}(0)}\\
\overline{\theta_{i2}(0)}\\
\vdots \\
\overline{\theta_{im}(0)}\\
\end{pmatrix} )K_0 + k_1 \dfrac{\Theta (H-H(0))}{\norm{S^*H}} :(K_0,k_1)\in \kcl \Bigg\}\\
&=\Bigg\{F:F=\alpha _i\Big(k_0\otimes e_i -\Theta \begin{pmatrix}
\overline{\theta_{i1}(0)}\\
\overline{\theta_{i2}(0)}\\
\vdots \\
\overline{\theta_{im}(0)}\\
\end{pmatrix}\Big)~K_0 :(K_0,0)\in \kcl \Bigg\}.
\end{align*}
Therefore the corresponding $S^*\oplus S^* $ invariant subspace is 
$$\kcl =\{(K_0,0)\in H^2_{\C ^2}(\D) : K_0 \quad \text{satisfies the following} \quad \ref{B}\}$$
such that
	$$\alpha _i\Big(k_0\otimes e_i -\Theta \begin{pmatrix}
	\overline{\theta_{i1}(0)}\\
	\overline{\theta_{i2}(0)}\\
	\vdots \\
	\overline{\theta_{im}(0)}\\
	\end{pmatrix}\Big)~K_0 \in \mathcal{K}_\Theta ,$$
	which is equivalent to the fact that 
\begin{align}\label{B}	
	K_0\in \cap_{j=1}^m Ker T_{\overline{\theta _{ji}-\theta_{ij}(0)}}=\cap_{j=1}^mK_{\zeta_{ij}} \quad \text{where} \quad \zeta_{ij} \quad \text{is an inner factor of}\quad \theta _{ji}-\theta_{ij}(0) .
\end{align}	
Furthermore it is easy to check that $\kcl$ is an $S^*\oplus S^*$ invariant subspace of $H^2_{\C ^2}(\D)$ using the scalar version of Proposition \ref{c}.
\subsection{\boldmath ${G \notin \Theta \hdcm} $}
$~$\\In this subsection we  consider the case that $G \notin \Theta \hdcm$, then the kernel of $T$ is a nearly $S^*$ invariant subspace with defect atmost $2$ and the defect space is $\fcl =\bigvee \{\Theta S^*H,P_{K_\Theta}G \}.$
Since $G\notin \Theta \hdcm $, then we find a nonzero $G_\zeta \in K_\Theta$ and $G_\Theta\in \Theta\hdcm$ such that $$G=G_\zeta + G_\Theta .$$
Therefore the identity \ref{z} reduces to
 \begin{equation}\label{C}
 \mu (1+\langle \Theta H,G_\Theta\rangle )=-\langle F_\zeta ,G_\zeta\rangle .
 \end{equation}
To proceed further we need the following remark concerning the projection $P_\mcl(k_0\otimes e_i)$.
\begin{rmrk}
	If $\mcl =Ker T\subset \mathcal{N}:=K_\Theta \oplus \langle \Theta H\rangle $ satisfies $\mathcal{N}=\mcl \oplus \langle R \rangle $, where $R= U+\rho \Theta H$ with $U\in K_\Theta$ and $\rho \in \C$. Then
	\begin{equation}\label{D}
	P_\mcl (k_0\otimes e_i)= k_0\otimes e_i -\Theta \begin{pmatrix}
	\overline{\theta_{i1}(0)}\\
	\overline{\theta_{i2}(0)}\\
	\vdots \\
	\overline{\theta_{im}(0)}\\
	\end{pmatrix} +\dfrac{\langle k_0\otimes e_i,\Theta H\rangle \Theta H}{\norm {\Theta H}^2}-\dfrac{\langle k_0\otimes e_i,U+\rho \Theta H\rangle}{\norm{U+\rho \Theta H}^2}(U+\rho \Theta H). 
	\end{equation}
\end{rmrk}
Now we need to analyse two cases according to  whether $v_\Theta :=1+\langle \Theta H,G_\Theta\rangle $ is zero or not along with the defect space $\fcl =\bigvee \{\Theta S^*H,P_{K_\Theta}G \}.$\\
\textbf{Case 1.} If $v_\Theta =0$, then \ref{C} holds if and only if $$F_\zeta \in \langle P_{K_\Theta }G\rangle^\perp =   \langle G_\zeta \rangle^\perp .$$ 
 Thus $\mcl =\mathcal{N}\ominus \langle G_\zeta \rangle$ and therefore substituting $U=G_\zeta$ and $\rho =0$ in \ref{D} we have
 $$P_\mcl (k_0\otimes e_i)= k_0\otimes e_i -\Theta \begin{pmatrix}
 \overline{\theta_{i1}(0)}\\
 \overline{\theta_{i2}(0)}\\
 \vdots \\
 \overline{\theta_{im}(0)}\\
 \end{pmatrix} +\dfrac{\langle k_0\otimes e_i,\Theta H\rangle \Theta H}{\norm {\Theta H}^2}-\dfrac{\langle k_0\otimes e_i,G_\zeta\rangle}{\norm{G_\zeta}^2}G_\zeta. $$
 Therefore $F_0=[\alpha_iP_\mcl(k_0\otimes e_i)]_{m\times 1}$ and hence by using the case (i) of Theorem \ref{a}, we have the follwing represention of $\mcl$: 
 \begin{align*}
 \mcl &=\Bigg\{F:F = \alpha_i\Bigg( k_0\otimes e_i -\Theta \begin{pmatrix}
 \overline{\theta_{i1}(0)}\\
 \overline{\theta_{i2}(0)}\\
 \vdots \\
 \overline{\theta_{im}(0)}\\
 \end{pmatrix} +\dfrac{\langle k_0\otimes e_i,\Theta H\rangle \Theta H}{\norm {\Theta H}^2}-\dfrac{\langle k_0\otimes e_i,G_\zeta\rangle}{\norm{G_\zeta}^2}G_\zeta\Bigg)  K_0 \\ 
 &\hspace*{2in}+ zk_1\dfrac{\Theta S^*H}{\norm {S^*H}}+zk_2\dfrac{G_\zeta }{\norm {G_\zeta }} :(K_0,k_1,k_2)\in \kcl\Bigg\}\\
 & = \Bigg\{F:F =\alpha_i \Bigg( k_0\otimes e_i -\Theta \begin{pmatrix}
 \overline{\theta_{i1}(0)}\\
 \overline{\theta_{i2}(0)}\\
 \vdots \\
 \overline{\theta_{im}(0)}\\
 \end{pmatrix} \Bigg)  K_0-\dfrac{k_1}{\norm {S^*H}}\Theta H(0)+ \Bigg(\dfrac{k_1}{\norm {S^*H}}+\dfrac{\langle k_0\otimes e_i,\Theta H\rangle }{\norm {\Theta H}^2}\Bigg)\Theta H \\ 
 &\hspace*{2in}-\Bigg(\dfrac{\langle k_0\otimes e_i,G_\zeta\rangle \alpha_i K_0}{\norm{G_\zeta}^2}-\dfrac{zk_2 }{\norm {G_\zeta }}\Bigg)G_\zeta :(K_0,k_1,k_2)\in \kcl\Bigg\}\\
 \end{align*}
and the corresponding $S^*\oplus S^*\oplus S^*$ invariant subspace is $$ \kcl =\Bigg\{(K_0,k_1,k_2)\in H^2_{\C^3}(\D):K_0,k_1,k_2 \text{~satisfies ~the~ following ~\ref{E}}\Bigg\},$$ where
 \begin{equation}\label{E}
 \begin{cases}
 \alpha_i \Bigg( k_0\otimes e_i -\Theta \begin{pmatrix}
 \overline{\theta_{i1}(0)}\\
 \overline{\theta_{i2}(0)}\\
 \vdots \\
 \overline{\theta_{im}(0)}\\
 \end{pmatrix} \Bigg)  K_0-\dfrac{k_1}{\norm {S^*H}}\Theta H(0)\in \mathcal{K}_\Theta \\
 \dfrac{k_1}{\norm {S^*H}}+\dfrac{\langle k_0\otimes e_i,\Theta H\rangle }{\norm {\Theta H}^2}\in \C \\
 \langle K_0,z^n\overline{ \alpha_i}G_i\rangle - \Bigg\langle\Bigg(\dfrac{\langle k_0\otimes e_i,G_\zeta\rangle \alpha_iS^{*n} K_0}{\norm{G_\zeta}^2}-\dfrac{zS^{*n}k_2 }{\norm {G_\zeta }}\Bigg)G_\zeta,G_\zeta\Bigg\rangle =0, ~for~n\in \mathbb{N}\cup \{0\}
 \end{cases}
 \end{equation}
and $G_\zeta =(G_1,G_2,\ldots ,G_m)$.
In a similar fashion like \ref{A} we  prove that  the first two conditions of \ref{E} also hold for $S^*K_0$, $S^*k_1$ . Moreover, the last condition also holds for $S^*K_0$ and $S^*k_2$ trivially. Thus $\kcl $ is an $S^*\oplus S^*\oplus S^*$ invariant subspace of $H^2_{\C ^3}(\D)$.\\
\textbf{Case 2.} Next we consider $v_\Theta \neq 0$, then the equation \ref{E} gives $\mu =-v_\Theta ^{-1}\langle F_\zeta ,G_\zeta\rangle$ and using \ref{B} and \ref{E} we have, $$ \mcl =Ker T=\{F: F=K -v_\Theta ^{-1}\langle K, G_\zeta \rangle \Theta H ,K\in K_\Theta \}.$$
Therefore by simple calculations we conclude that $\mathcal{N}=\mcl \oplus \langle G_\zeta +\dfrac{ \overline{v}_\Theta}{\norm H ^2} \Theta H \rangle$. Thus by letting $U=G_\zeta $ and $\rho =\dfrac{ \overline{v}_\Theta}{\norm H ^2}$ in (5.20), we get
\begin{align*}
P_\mcl (k_0\otimes e_i)&= k_0\otimes e_i -\Theta \begin{pmatrix}
\overline{\theta_{i1}(0)}\\
\overline{\theta_{i2}(0)}\\
\vdots \\
\overline{\theta_{im}(0)}\\
\end{pmatrix} +\dfrac{\langle k_0\otimes e_i,\Theta H\rangle \Theta H}{\norm {\Theta H}^2}-\dfrac{\langle k_0\otimes e_i,G_\zeta+\dfrac{ \overline{v}_\Theta}{\norm H ^2} \Theta H\rangle}{\norm{G_\zeta+\dfrac{ \overline{v}_\Theta}{\norm H ^2} \Theta H}^2}(G_\zeta+\dfrac{ \overline{v}_\Theta}{\norm H ^2} \Theta H) \\
&=k_0\otimes e_i -\Theta \begin{pmatrix}
\overline{\theta_{i1}(0)}\\
\overline{\theta_{i2}(0)}\\
\vdots \\
\overline{\theta_{im}(0)}\\
\end{pmatrix} +\dfrac{\langle k_0\otimes e_i,\Theta H\rangle \Theta H}{\norm { H}^2}-\omega _\Theta(G_\zeta+\dfrac{ \overline{v}_\Theta}{\norm H ^2} \Theta H),
\end{align*}
where $$\omega _\Theta =\dfrac{\langle k_0\otimes e_i,G_\zeta+\dfrac{ \overline{v}_\Theta}{\norm H ^2} \Theta H\rangle}{\norm{G_\zeta+\dfrac{ \overline{v}_\Theta}{\norm H ^2} \Theta H}^2}. $$
Therefore from the Case (i) of Theorem \ref{a} we have the following representation of $\mcl$:

\begin{align*}
\mcl &= \Bigg(K_\Theta \oplus \langle \Theta H\rangle \Bigg)\ominus \Bigg\langle G_\zeta +\dfrac{ \overline{v}_\Theta}{\norm H ^2} \Theta H \Bigg\rangle \\
&=\Bigg\{F:F=\alpha_i\Bigg( k_0\otimes e_i -\Theta \begin{pmatrix}
\overline{\theta_{i1}(0)}\\
\overline{\theta_{i2}(0)}\\
\vdots \\
\overline{\theta_{im}(0)}\\
\end{pmatrix} +\dfrac{\langle k_0\otimes e_i,\Theta H\rangle \Theta H}{\norm { H}^2}-\omega _\Theta(G_\zeta+\dfrac{ \overline{v}_\Theta}{\norm H ^2} \Theta H)\Bigg)K_0 \\ 
&\hspace*{3in}+k_1\dfrac{\Theta (H-H(0))}{\norm {S^*H}}+k_2\dfrac{zG_\zeta}{\norm{G_\zeta}}:(K_0,k_1,k_2)\in \kcl \Bigg\}\\
&=\Bigg\{F:F=\alpha_i(k_0\otimes e_i)K_0-\Bigg( \alpha_iK_0\Theta \begin{pmatrix}
\overline{\theta_{i1}(0)}\\
\overline{\theta_{i2}(0)}\\
\vdots \\
\overline{\theta_{im}(0)}\\
\end{pmatrix}+k_1\dfrac{\Theta H(0)}{\norm {S^*H}} \Bigg)  \\
&\hspace*{2in}+\Bigg(\alpha_i \dfrac{\langle k_0\otimes e_i,\Theta H\rangle }{\norm { H}^2}K_0+\dfrac{k_1}{\norm {S^*H}}-\dfrac{zk_2}{\norm{G_\zeta}}\dfrac{ \overline{v}_\Theta}{\norm H ^2}\Bigg)\Theta H \\
&\hspace*{2in} +\Bigg( -\alpha_iK_0\omega_\Theta +\dfrac{zk_2}{\norm{G_\zeta}} \Bigg)(G_\zeta+\dfrac{ \overline{v}_\Theta}{\norm H ^2} \Theta H) :(K_0,k_1,k_2)\in \kcl\Bigg\}
\end{align*}
and the corresponding $S^*\oplus S^*\oplus S^*$ invariant subspace is $$ \kcl =\{(K_0,k_1k_2):K_0,k_1,k_2 \quad \text{satisfies the following } \quad \ref{F} \},$$ where
\begin{equation}\label{F}
\begin{cases}
\alpha_i(k_0\otimes e_i)K_0-\Bigg( \alpha_iK_0\Theta \begin{pmatrix}
\overline{\theta_{i1}(0)}\\
\overline{\theta_{i2}(0)}\\
\vdots \\
\overline{\theta_{im}(0)}\\
\end{pmatrix}+k_1\dfrac{\Theta H(0)}{\norm {S^*H}} \Bigg) \in \mathcal{K}_\Theta \\
\alpha_i \dfrac{\langle k_0\otimes e_i,\Theta H\rangle }{\norm { H}^2}K_0+\dfrac{k_1}{\norm {S^*H}}-\dfrac{zk_2}{\norm{G_\zeta}}\dfrac{ \overline{v}_\Theta}{\norm H ^2}\in \C \\
\langle K_0,z^n\overline{ \alpha_i}G_i\rangle+ const +\Bigg\langle\Bigg( -\alpha_iS^{*n}K_0\omega_\Theta +\dfrac{zS^{*n}k_2}{\norm{G_\zeta}} \Bigg)(G_\zeta+\dfrac{ \overline{v}_\Theta}{\norm H ^2} \Theta H), G_\zeta+\dfrac{ \overline{v}_\Theta}{\norm H ^2} \Theta H \Bigg\rangle =0,\\ \hspace*{5in}~for~n\in \mathbb{N}\cup \{0\},
\end{cases}
\end{equation}
and  $G_\zeta =(G_1,G_2,\ldots ,G_m)$.
By repeating the similar explanations as in (\ref{A}), the conditions of (\ref{F}) also hold for $S^*K_0,S^*k_1,S^*k_2$ and hence we conclude that, $\kcl $ is an $S^*\oplus S^*\oplus S^*$ invariant subspace of $H^2_{\C ^3}(\D)$.
Now we give an example to understand the case $\Phi =\Theta ^*$,  where $\Theta $ an inner multiplier.
\begin{xmpl}
	Suppose \begin{equation}
\Theta  (z)=
	\begin{bmatrix}  
	z^s&0&\cdots&0 \\
	0&z^s&\cdots&0 \\
	\vdots&\vdots&\ddots&\vdots\\
	0&0&\cdots&z^s \\
	\end{bmatrix}_{m\times m} \in H^\infty(\D,\mathcal{L}(\C ^m)) , ~where~s\geq 1,
	\end{equation}
 $G=G_\zeta +G_\Theta$ with $G_\zeta =(z^{s-1},1,\ldots ,1) ,G_\Theta \in \Theta\hdcm=z^s\hdcc \oplus \ldots \oplus z^s\hdcc$ and $H= \dfrac{1}{\sqrt{2}}(1-z,0,\ldots ,0) $. It then follows that, $\mcl$ is a nearly $S^*$ invariant subspace with defect space $$\fcl =\bigvee \{z^sS^*H,G_\zeta\}.$$
 \textbf{Case 1.} If $v_\Theta =1+\langle \Theta H,G_\Theta \rangle =0 $, then it follows that 
 \begin{align*}
 \mcl &= \bigvee \{1\otimes e_1,\{z\otimes e_i\}_{i=1}^m,\{z^2\otimes e_i\}_{i=1}^m,\ldots ,\{z^{s-2}\otimes e_i\}_{i=1}^m,\{z^{s-1}\otimes e_j \}_{j=2}^m \}\oplus \langle z^sH\rangle \\
 &=\Bigg\{F:F =   K_0\otimes e_1-k_1\dfrac{z^sH(0)}{\norm {S^*H}}+ k_1\dfrac{z^s H}{\norm {S^*H}}+zk_2\dfrac{G_\zeta }{\norm {G_\zeta }} :(K_0,k_1,k_2)\in \kcl\Bigg\}
 \end{align*}
with an $S^*\oplus S^*\oplus S^*$ invariant subspace $$ \kcl =\{(K_0,k_1,k_2):K_0,k_1 \quad \text{satiesfies the following} \quad (\ref{jk}) \quad \text{and}\quad  k_2\in \hdcc  \} $$
such that 
\begin{equation}\label{jk}
\begin{cases}
K_0\otimes e_1 -k_1\dfrac{z^sH(0)}{\norm {S^*H}} \in \mathcal{K}_\Theta \\
\dfrac{k_1}{\norm {S^*H}}\in \C \\
\langle K_0, z^{n+s-1}\rangle  =0 ~for~n\in \mathbb{N}\cup \{0\}.
\end{cases}
\end{equation}
\textbf{Case 2.} If $v_\Theta =1+\langle \Theta H,G_\Theta \rangle \neq 0 $, then it follows that 
\begin{align*}
\mcl &= \bigvee \Bigg\{1\otimes e_1,\{z\otimes e_i\}_{i=1}^m,\{z^2\otimes e_i\}_{i=1}^m,\ldots ,\{z^{s-2}\otimes e_i\}_{i=1}^m,\{z^{s-1}\otimes e_j \}_{j=1}^m \Bigg\}\oplus \langle z^sH \rangle\\
&\hspace*{5in} \ominus \langle G_\zeta +\dfrac{ \overline{v}_\Theta}{\norm H ^2} z^s H \rangle \\
&=\Bigg\{F:F=K_0\otimes e_1-k_1\dfrac{z^s H(0)}{\norm {S^*H}}+\Bigg(\dfrac{k_1}{\norm {S^*H}}-\dfrac{zk_2}{\norm{G_\zeta}}\overline{v}_\Theta\Bigg)z^s H \\
&\hspace*{3in} +\dfrac{zk_2}{\norm{G_\zeta}}(G_\zeta+\overline{v}_\Theta z^s H) :(K_0,k_1,k_2)\in \kcl\Bigg\}
\end{align*}
with an $S^*\oplus S^*\oplus S^*$ invariant subspace $$ \kcl =\{(K_0,k_1,k_2):K_0,k_1,k_2 \quad \text{satiesfies the  following}\quad (\ref{kj})   \} $$
such that 
\begin{equation}\label{kj}
\begin{cases}
K_0\otimes e_1 -k_1\dfrac{z^sH(0)}{\norm {S^*H}} \in \mathcal{K}_\Theta \\
\dfrac{k_1}{\norm {S^*H}}-\dfrac{zk_2}{\norm{G_\zeta}}\overline{v}_\Theta\in \C \\
\Bigg\langle K_0, z^{n}\Bigg(z^{s-1}+\dfrac{\overline{v}_\Theta z^s}{\sqrt{2}}(1-z)\Bigg)\Bigg\rangle +\langle k_1,\dfrac{\overline{v}_\Theta}{\sqrt{2}}z^n \rangle  +\langle k_2,\dfrac{z^n\overline{v}_\Theta }{\sqrt{2}}(1-z) \rangle=0 ~for~n\in \mathbb{N}\cup \{0\}.
\end{cases}
\end{equation}
\end{xmpl}

\section*{Acknowledgements}
\textit{ We would like to thank Prof. Joydeb Sarkar and  Dr. Bata Krishna Das for introducing this area to us.}

\end{document}